\DeclareSymbolFont{cyrletters}{OT2}{wncyr}{m}{n}
\DeclareMathSymbol{\Sha}{\mathalpha}{cyrletters}{"58}
\newcommand{\bC}{{\mathbb{C}}}
\newcommand{\bF}{{\mathbb{F}}}
\newcommand{\bP}{{\mathbb{P}}}
\newcommand{\bQ}{{\mathbb{Q}}}
\newcommand{\bZ}{{\mathbb{Z}}}
\newcommand{\Bz}{{\mathbf{z}}}
\newcommand{\BB}{\mathbf{B}}
  \newcommand{\C}{{\mathcal{C}}}
  \newcommand{\F}{{\mathcal{F}}}
  \newcommand{\G}{{\mathcal{G}}}
  \newcommand{\K}{{\mathcal{K}}}  
\renewcommand{\L}{{\mathcal{L}}}
\renewcommand{\O}{{\mathcal{O}}}
\renewcommand{\S}{{\mathcal{S}}}
  \newcommand{\UUU}{{\mathcal{U}}}
  \newcommand{\X}{{\mathcal{X}}}
  \newcommand{\Y}{{\mathcal{Y}}}
  \newcommand{\Z}{{\mathcal{Z}}}
  \newcommand{\HH}{\mathcal{H}}
\newcommand{\fp}{\mathfrak{p}}
\newcommand{\fm}{\mathfrak{m}}
\newcommand{\fY}{\mathfrak{Y}}
\newcommand{\fH}{\mathfrak{H}}
\newcommand{\Hom}{\operatorname{Hom}}
\newcommand{\ord}{\operatorname{ord}}
\newcommand{\Gal}{\operatorname{Gal}}
\newcommand{\GL}{\operatorname{GL}}
\newcommand{\Aut}{\operatorname{Aut}}
\newcommand{\HHH}{\mathcal{H}}
\newcommand{\fS}{\mathfrak{S}}
\newcommand{\fR}{\mathfrak{R}}
\newcommand{\fF}{\mathfrak{F}}
\newcommand{\ep}{\varepsilon}
\newcommand{\Jac}{\operatorname{Jac}}
\newcommand{\dR}{\operatorname{dR}}
\newcommand{\Gr}{\operatorname{Gr}}
\newcommand{\Aff}{\operatorname{Aff}}
\newcommand{\codim}{\operatorname{codim}}
\newcommand{\GM}{\operatorname{GM}}
\newcommand{\size}{\operatorname{size}}
\newcommand{\Frob}{\operatorname{Frob}}
\newcommand{\Spec}{\operatorname{Spec}}
\newcommand{\ol}{\overline}
\newcommand{\upchi}{{\raise.35ex\hbox{$\chi$}}}
\newcommand{\SL}{\operatorname{SL}}
\newcommand{\prim}{\operatorname{prim}}
\newcommand{\ra}{\rightarrow}
\newcommand{\crys}{\textnormal{crys}}
\newtheorem{theorem}{Theorem}[section]
\newtheorem{corollary}[theorem]{Corollary}
\newtheorem{proposition}[theorem]{Proposition}
\newtheorem{lemma}[theorem]{Lemma}
\newtheorem{conjecture}[theorem]{Conjecture}
\newtheorem{claim}[theorem]{Claim}
\theoremstyle{definition}
\theoremstyle{remark}
\newtheorem{remark}[theorem]{Remark}
\numberwithin{equation}{section}
\begin{document}

\title{Uniformity of fibres of period mappings and the $S$-unit equation}



\author{Brett Nasserden}
\address{Department of Pure Mathematics \\
University of Waterloo \\
200 University Ave W \\ 
Waterloo, Ontario, Canada \\ N2L 3G1}
\email{bnasserd@uwaterloo.ca}

\author{Stanley Yao Xiao}
\address{Department of Mathematics \\
University of Toronto \\
Bahen Centre \\
40 St. George Street, Room 6290 \\
Toronto, Ontario, Canada \\  M5S 2E4 }
\email{syxiao@math.toronto.edu}
\indent


\begin{abstract} In this paper we give a refinement of the method introduced by Lawrence and Venkatesh in \cite{LV} and thereby showing that their proof of Mordell's conjecture is uniform up to a uniform bound on the number of Galois representations attached to some family of abelian varieties. We are also able to give an unconditional proof of a uniform boundedness statement on the number of solutions to $S$-unit equations, which qualitatively is best possible, recovering a result of Evertse \cite{Eve}. 
\end{abstract}

\maketitle

\vspace{-5mm}


\section{Introduction}

One of the most stunning and celebrated achievements in arithmetic geometry, and number theory in general, is Faltings' famous proof of Mordell's conjecture asserting that every algebraic curve $C$ defined over a number field $K$ having genus $g$ exceeding one has at most finitely many $K$-rational points. Faltings' proof is even more impressive when one takes into account the fact that he also proved Shafarevich's conjecture and Tate's conjecture for abelian varieties in the same paper \cite{Fal}. \\

In \cite{CHM} Caporaso, Mazur, and Harris proved a surprising result: assuming the purely qualitative Bombieri-Lang conjecture, asserting that for any variety $V$ of general type defined over a number field $K$, the set of $K$-rational points on $V$ is not Zariski dense, they proved that Faltings' theorem is \emph{uniform}, in the sense that for any number field $K$ and integer $g \geq 2$ there exists a positive $N(K,g)$ such that for any algebraic curve $C$ defined over $K$ having genus $g$, the cardinality of $C(K)$ is bounded by $N(K,g)$. This assertion is known as the \emph{uniform boundedness conjecture}. \\

In \cite{Sto} and \cite{KRZB} Stoll and Katz, Rabinoff, and Zuerick-Brown proved respectively that the uniform boundedness conjecture holds for hyperelliptic curves and arbitrary algebraic curves respectively, provided that one assumes in addition that the rank $r = r(K)$ of the Jacobian variety of $C$ satisfies $r \leq g - 3$. By the work of Bhargava and Gross \cite{BG}, this hypothesis is expected to hold for a proportion tending to 100\% of hyperelliptic curves defined over $\bQ$ as the genus $g$ tends to infinity. \\

The papers \cite{Sto} and \cite{KRZB} both employ the so-called \emph{Chabauty-Coleman} method. The fundamental restriction is that one must assume that $r(K) < g$ in order for the argument to run. M.~Kim has proposed, starting in \cite{Kim1}, a remarkable generalization of Chabauty's method which replaces the Mordell-Weil lattice $\Jac(Y)(K)$ and the $v$-adic Jacobian $\Jac(Y)(K_v)$ by non-abelian quotients of the unipotent fundamental group cut out by certain local conditions, which he calls the global and local \emph{Selmer varieties} respectively. If the dimension of the global Selmer variety is less than the corresponding local Selmer variety, then one can run Chabauty's argument to prove finiteness. Several authors, including J.~Balakrishnan and N.~Dogra have immensely pushed Kim's program forward, culminating in some truly stunning results, such as \cite{BDMTV}. \\

In 2018, B.~Lawrence and A.~Venkatesh introduced a remarkable new approach to Mordell's conjecture in \cite{LV} which is distinct from both the Chabauty-Coleman-Kim approach and Faltings' proof \cite{Fal}. Morally speaking, Lawrence and Venkatesh's approach replaces the dependency on the rank of the Mordell-Weil lattice or more generally the dimension of a global Selmer variety by the finiteness of global Galois representations of pure weight having good reduction outside of a set of primes $S$. The latter is similar to Faltings' original approach, but the added benefit is that Lawrence and Venkatesh's set-up is more flexible and conducive to variation in some families. \\

The goal of this paper is to show that, indeed, the methods introduced by Lawrence and Venkatesh offer at least some flexibility. In principle, this reduces the question of uniform boundedness of rational points to the uniform finiteness of some families of Galois representations. We propose the following conjecture in that regard: 

\begin{conjecture} \label{conj} Let $K$ be a number field, $S$ a set of finite places of $K$, and $g \geq 2$ an integer. Let $\F$ be a one-parameter family of complex abelian varieties $A$ of dimension $g$, parametrized by an algebraic curve $\C$ defined over $K$. Let $\UUU(K, S, \F)$ be the set of ($K$-isogeny classes of) abelian varieties $[A]_K$, defined over $K$ and having good reduction outside of $S$, where there is some $A^\prime\in [A]_K$ with $A^\prime(\bC) \in \F$. Then $|\UUU(K, S, \F)|$ depends at most on $K$ and $g$. 
\end{conjecture}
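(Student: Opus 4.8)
The plan is to run the Lawrence--Venkatesh machine of \cite{LV} \emph{in families} over the fixed curve $\C$, and to extract a bound on $|\UUU(K,S,\F)|$ of the shape
(number of relevant Galois representations)\,$\times$\,(maximal size of a fibre of a period mapping). First I would spread $\F$ out to a family of abelian varieties $\pi\colon \A\to\C$ over $\O_{K,S_0}$ for an enlarged but $\F$-independent $S_0\supseteq S$, and record the finite locus $\Sigma\subset\C$ of bad reduction of $\pi$. Given a class $[A]_K\in\UUU(K,S,\F)$, choose $A'\in[A]_K$ with $A'(\bC)\in\F$; a Chevalley--Weil plus Hermite--Minkowski argument shows the associated point of $\C$ is defined over an extension $K'/K$ of bounded degree and is integral on $\C\setminus\Sigma$ away from a controlled set $S'$. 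To $A'$ — or, as in \cite{LV}, to a Kodaira--Parshin cover of $\C$ branched at that point — attach the $\ell$-adic representation $\rho_{A'}=H^1_{\et}\bigl((A')_{\ol K},\bQ_\ell\bigr)$, which is pure of weight one, of dimension $2g$, and unramified outside $S'\cup\{\ell\}$.

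Next comes the Lawrence--Venkatesh dichotomy: two points of $\C$ producing isomorphic semisimplified representations (up to the finite ambiguity introduced by passing to an isogeny class) must, by $p$-adic Hodge theory combined with the large geometric monodromy of $\F$, lie in a common proper Zariski-closed subset of $\C$, hence — $\C$ being a curve — in a finite set. The refinement I would supply, which is the content promised by the title of the paper, is that the \emph{size} of each such fibre is bounded only in terms of numerical invariants (the genus and gonality of $\C$, the degree of the period map, $g$, and $[K:\bQ]$) and not on $S$ or on the choice of $\F$. This is where the definability results of Bakker--Tsimerman and a Pila--Wilkie-type counting estimate enter: since $\F$ is one-parameter, the relevant Hodge locus is zero-dimensional, so the period map has discrete fibres, and a height/transcendence estimate turns discreteness into an explicit cardinality bound.

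The hard part will be the remaining factor: a bound, \emph{uniform in $S$}, for the number of isomorphism classes of semisimple $\ell$-adic representations of $G_{K'}$ of dimension $2g$ and weight one unramified outside $S'\cup\{\ell\}$ that actually arise from $\F$. Faltings' proof of the Shafarevich conjecture \cite{Fal} gives finiteness for each fixed $S$, but the count it yields grows with $|S|$; indeed, as the unconditional $S$-unit result of this paper (recovering Evertse \cite{Eve}) shows, such growth is genuine and unavoidable for general Diophantine problems, so one cannot hope to control this factor by soft means. To obtain uniformity one must exploit that these representations are not arbitrary but are constrained by the fixed monodromy of $\F$ — i.e.\ one needs a rigidity input of Frey--Mazur type — or else one must sidestep the Galois count altogether and bound directly the number of $K'$-points of bounded height on the image of $\C$ in $\A_g$, using the uniform Mordell--Lang estimates of Dimitrov--Gao--Habegger and of Kühne. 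A secondary difficulty is that the fibre bound of the previous paragraph still depends on invariants of $\C$, so one must further argue that for a one-parameter family of $g$-dimensional abelian varieties these invariants are themselves controlled, or are absorbed once the Galois count is in hand.

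In the absence of such an input the argument above proves Conjecture~\ref{conj} only conditionally on a uniform bound for the number of these Galois representations, which is precisely the reduction announced in the abstract; unconditionally it yields the uniform $S$-unit statement, where the corresponding representations can be made completely explicit and the count carried out by hand.
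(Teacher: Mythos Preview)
The statement you were asked to prove is Conjecture~\ref{conj}, and the paper does \emph{not} prove it. It is explicitly stated as an open conjecture; Theorem~\ref{MT} is proved \emph{assuming} it, and the only supporting evidence the paper offers is the heuristic with Mordell curves (equation~(\ref{mor cur})) in the paragraph following Corollary~\ref{cor1}. So there is no ``paper's own proof'' to compare against.

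That said, your proposal is not a proof either, and you essentially acknowledge this in your final paragraph. The logical structure you outline is circular. Conjecture~\ref{conj} asks for a bound, uniform in $S$, on the number of $K$-isogeny classes of abelian varieties in $\F$ with good reduction outside $S$. By Faltings' isogeny theorem this is the same as bounding the number of (semisimplified) Galois representations of the relevant type arising from $\F$. Your strategy is to run the Lawrence--Venkatesh machine on $\C$ and reduce to exactly such a bound on Galois representations --- but that \emph{is} the conjecture, rephrased. The ``fibre of the period map'' part of your argument corresponds to what the paper actually proves (Proposition~\ref{not zar} and Proposition~\ref{period fib}); the ``hard part'' you identify is precisely Conjecture~\ref{conj} itself.

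There are also technical problems with the sketch taken at face value. The conjecture places no genus hypothesis on $\C$, so one cannot invoke a Kodaira--Parshin cover of $\C$ or expect Mordell-type input there; indeed in the $S$-unit application $\C=\bP^1\setminus\{0,1,\infty\}$. The Chevalley--Weil/Hermite--Minkowski step you describe produces points on $\C$ over extensions $K'$ whose degree depends on the family and whose set of bad primes $S'$ depends on $S$, so nothing is gained toward uniformity. And the appeal to Dimitrov--Gao--Habegger/K\"uhne would bound rational points on $\C$ (if $\C$ had large genus) in terms of the Mordell--Weil rank of its Jacobian, which is again an $S$-dependent (indeed curve-dependent) quantity in this context.

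In short: the paper treats this as an assumption, not a theorem, and your write-up is best read as an explanation of \emph{why} the conjecture is the correct bottleneck rather than as a proof of it.
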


While Conjecture \ref{conj} appears somewhat unmotivated, it is actually exactly what is needed to prove the following uniformity result:

\begin{theorem} \label{MT} Let $K$ be a number field, $v$ a finite place of $K$, and $g \geq 2$ an integer. Let $\fY$ be a complex algebraic curve of genus $g$, and let $\Y_v(K)$ denote the set of curves $Y/K$ which are isomorphic to $\fY$ over $\bC$ and have good reduction at $v$. \\

Assuming Conjecture \ref{conj}, there exists a number $N(\fY, K,g,v)$ which depends only on $\fY, K, g, v$ such that for any $Y \in \Y(K)$, the number of $K$-rational points on $Y$ is bounded by $N(\fY, K, g, v)$.  
\end{theorem}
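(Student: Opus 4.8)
The plan is to run the Lawrence--Venkatesh argument of \cite{LV} for a single curve $Y\in\Y_v(K)$ and to verify that each of the quantities which bound $|Y(K)|$ in that argument is uniform in $Y$, with the one exception of the finiteness of the attached family of Galois representations, which Conjecture~\ref{conj} supplies uniformly. We may assume $Y(K)\neq\emptyset$; fix $y_0\in Y(K)$ and put $\C_Y:=Y\smallsetminus\{y_0\}$, an affine curve over $K$. Since $Y(K)=\C_Y(K)\cup\{y_0\}$, it suffices to bound $|\C_Y(K)|$.

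First I would invoke the Kodaira--Parshin construction of \cite{LV}: fixing an auxiliary prime $\ell=\ell(g)$ coprime to the residue characteristic of $v$, one builds from $Y$ an abelian scheme $\pi_Y\colon\X_Y\to\C_Y$ over $K$ whose fibres $\X_{Y,y}$, for $y\in\C_Y(K)$, are abelian varieties over $K$ of a fixed dimension $g'=g'(g)$. Three properties of this family are needed. (i) Its base change to $\bC$ has complex isomorphism type depending only on $\fY$; writing $\F$ for this one-parameter family (parametrised by the curve $\C_Y/K$), each $\X_{Y,y}(\bC)$ is a fibre of $\F$. (ii) The $\X_{Y,y}$ have good reduction outside a finite set $S_Y$ that depends only on $Y$ (morally, the bad primes of $Y$ together with the divisors of $\ell$), \emph{uniformly in $y$}. (iii) As $Y$ has good reduction at $v$, the $p$-adic \'etale realisation $\rho_y:=H^1_{\et}(\X_{Y,y,\overline{K}},\bQ_p)$ (with $p$ the residue characteristic of $v$; more precisely, the $\ell$-isotypic summand cut out by the construction) is crystalline at $v$, with Hodge--Tate weights independent of $y$. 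Any base change of $K$ needed along the way, e.g.\ to achieve semistable reduction, can be arranged of degree bounded in terms of $g$, so it does not affect the shape of the bound.

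Next I would assemble three estimates. (a) As $Y$ has good reduction at $v$, the set $Y(K_v)$ is a disjoint union of residue disks indexed by $\widetilde{Y}(\kappa_v)$, so the number of disks is at most $N_2(g,v):=q_v+1+2g\sqrt{q_v}$ by the Weil bound. (b) On a fixed residue disk $D$, the isocrystal underlying $\rho_y|_{G_{K_v}}$ is independent of $y\in D\cap\C_Y(K)$, and $\rho_y|_{G_{K_v}}$ is then determined by its Hodge filtration, which varies $p$-adic analytically with $y$ through a period map $\Phi_D$ into a flag variety; two points of $D$ with isomorphic local representations have filtrations in a single orbit of the automorphism group of that isocrystal. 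The geometric core of \cite{LV}---the big monodromy of the complex Kodaira--Parshin variation of Hodge structure, a feature of $\fY$ alone---forces the image of $\Phi_D$ to meet each such orbit in a proper analytic subset of $D$; hence the number of $y\in D\cap\C_Y(K)$ with $\rho_y|_{G_{K_v}}$ in a prescribed isomorphism class is at most a B\'ezout-type quantity $C=C(\fY,g,v)$, independent of $Y$ and of the disk. (c) By Faltings' isogeny theorem and the semisimplicity of the first cohomology of an abelian variety, $y\mapsto\rho_y$ factors through $y\mapsto[\X_{Y,y}]_K$; since every $\X_{Y,y}$ has good reduction outside $S_Y$ and lies complex-analytically in $\F$, Conjecture~\ref{conj} gives
\[
 \bigl|\{\,\rho_y:y\in\C_Y(K)\,\}\bigr|\;\le\;\bigl|\UUU(K,S_Y,\F)\bigr|\;\le\;N_1(K,g'),
\]
its uniformity in $S$ being exactly what lets us disregard the dependence of $S_Y$ on $Y$. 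Since the isomorphism class of $\rho_y$ determines that of $\rho_y|_{G_{K_v}}$, combining (a)--(c) shows that each of the at most $N_2(g,v)$ residue disks contains, over each of these at most $N_1(K,g')$ isomorphism classes, at most $C(\fY,g,v)$ points of $\C_Y(K)$, so that
\[
 |Y(K)|\;\le\;1+N_2(g,v)\,C(\fY,g,v)\,N_1(K,g');
\]
since $g'=g'(g)$, this bound depends only on $\fY,K,g,v$.

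The main obstacle is step (b): one must check that the Lawrence--Venkatesh period-map estimate is genuinely uniform, i.e.\ that both the non-degeneracy of $\Phi_D$ and the intersection number $C$ are controlled purely by the complex geometry of the Kodaira--Parshin family over $\fY$ (together with the local data at $v$), with no hidden dependence on the arithmetic of the particular model $Y$. A secondary delicate point is property (ii), that the set $S_Y$ outside of which the fibres $\X_{Y,y}$ have good reduction may be taken independent of the point $y$; this is established in \cite{LV} but must be carried along with care. Granting these together with Conjecture~\ref{conj}, the theorem follows as above.
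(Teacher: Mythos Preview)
Your overall architecture matches the paper's: run Lawrence--Venkatesh, verify that each finiteness input is uniform in $Y$, and plug in Conjecture~\ref{conj} in place of Faltings' lemma. Your step~(b) is exactly the paper's main technical refinement (Proposition~\ref{not zar}, and the supporting Lemma~\ref{fin Z} bounding the number of Frobenius-centralizers in terms of $v$ alone). Your residue-disk count~(a) and your use of Conjecture~\ref{conj} in~(c) are also what the paper does.

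There is, however, a genuine structural gap in your description of the Kodaira--Parshin family. The family in \cite{LV} is not an abelian scheme over $Y\smallsetminus\{y_0\}$; it is a tower $X_q\to H_q\xrightarrow{\pi} Y$ with $\pi$ finite \'etale of degree $d$ and $X_q\to H_q$ the abelian scheme. For $y\in Y(K)$ the fibre $\pi^{-1}(y)$ consists of points $y'$ defined over varying extensions $K(y')$, and the relevant Galois representations $\rho_{y'}$ are representations of $G_{K(y')}$, not of $G_K$. The argument does not proceed by associating one $K$-abelian variety to $y$; rather one must \emph{choose}, for each $y$, a $y'\in\pi^{-1}(y)$ and a place $w\mid v$ with $[K(y')_w:K_v]\ge 8$ and $\rho_{y'}$ simple. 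This is Lemma~6.1 of \cite{LV}, and there is a finite exceptional set $F$ where no such $y'$ exists. The paper's contribution here (Section~3) is to walk through that lemma together with Lemmas~6.3--6.4 of \cite{LV} and observe that the dependencies are only on $K$, $v$, and linear-algebraic data, so that $|F|$ is uniform over the twist family. Your sketch collapses $H_q$ to $Y$, assumes the fibres are abelian varieties over $K$, and consequently never sees the simplicity/bad-set issue at all; without it, step~(b) is not applicable, since the period-map dimension estimate in \cite{LV} needs the centralizer bound coming from simplicity and the degree $\ge 8$ condition.

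A smaller point: your set $S_Y$ depends on $Y$, and you correctly note that Conjecture~\ref{conj} absorbs this. But because the representations live over the varying fields $K(y')$, applying Conjecture~\ref{conj} literally (which is phrased over a fixed $K$) requires the same bookkeeping with the \'etale cover that you have suppressed.
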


Theorem \ref{MT} is derived from our technical refinement of the arguments in \cite{LV}. In \cite{LV}, Lawrence and Venkatesh defines a \emph{$p$-adic period map} from the space of semi-simple representations $\Gal(\ol{\bQ_p}/K_v)$ to a certain flag variety $\fH$. Indeed, we obtain such a result as our Proposition \ref{not zar}. \\

Theorem \ref{MT} implies that we have uniform boundedness in any \emph{twist family}. Two immediate applications come to mind: 

\begin{corollary} \label{cor1} Let $F$ be a binary form of degree $2g + 2, g \geq 2$ having integer coefficients and irreducible over $\bQ$. If Conjecture \ref{conj} is true, then there exists a positive integer $N(F)$ such that for any non-zero square-free integer $d$, the number of rational points on the curve
\[dy^2 = F(u,v)\]
is at most $N(F)$.
\end{corollary}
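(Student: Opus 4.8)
The plan is to realize the curves $dy^2=F(u,v)$ as a single twist family and apply Theorem \ref{MT}, using one auxiliary quadratic field purely to normalize ramification.

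First I would set up the family. Since $F$ is irreducible over $\bQ$ it is separable and divisible by neither $u$ nor $v$, so $F(u,1)$ has degree exactly $2g+2$ and the smooth projective model $C_1$ of $y^2=F(u,1)$ is a hyperelliptic curve of genus $g$; put $\fY := C_1\times_{\bQ}\bC$. For a nonzero squarefree integer $d$ let $C_d$ be the smooth projective model of $dy^2=F(u,v)$; this is the quadratic twist of $C_1$ by $d$, so $C_d\times_\bQ\bQ(\sqrt d)\cong C_1\times_\bQ\bQ(\sqrt d)$, and in particular $C_d\times_\bQ\bC\cong\fY$. Thus every $C_d$ is a curve over $\bQ$ becoming isomorphic to the fixed complex curve $\fY$ of genus $g$; the only obstruction to quoting Theorem \ref{MT} is its good-reduction hypothesis at a fixed place.

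Next I would arrange that hypothesis uniformly in $d$. Fix an odd prime $p$ of good reduction for $C_1$ (all but finitely many $p$ work), set $K=\bQ(\sqrt p)$, and let $w\mid p$ be the ramified place of $K$, with uniformizer $\pi=\sqrt p$. I claim $C_d$ has good reduction at $w$ for every squarefree $d$: if $p\nmid d$, then $d$ and the discriminant of $F$ are units in $\O_{K,w}$ and $dy^2=F(u,1)$ is already a smooth $\O_{K,w}$-model; if $p\mid d$, write $d=pe$ with $p\nmid e$, and the $K$-isomorphism $y\mapsto\pi^{-1}Y$ carries $dy^2=F(u,1)$ to $eY^2=F(u,1)$, which is again a smooth $\O_{K,w}$-model. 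Hence $C_d\times_\bQ K$ lies in the set $\Y_w(K)$ of Theorem \ref{MT} attached to $\fY,K,g,w$, so, granting Conjecture \ref{conj}, that theorem yields $\#C_d(\bQ)\le\#C_d(K)\le N(\fY,K,g,w)$; since the right-hand side depends only on $F$ (through $\fY$, $g$, and the fixed $p$), I would set $N(F):=N(\fY,\bQ(\sqrt p),g,w)$. (If one prefers to stay over $\bQ$, an equivalent route splits into $p\nmid d$, handled by Theorem \ref{MT} at $p$, and $p\mid d$, handled over $\bQ(\sqrt p)$ at $w$, and takes the maximum of the two constants.)

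The entire arithmetic content is in Theorem \ref{MT}, hence in Conjecture \ref{conj}; everything else is bookkeeping. The one point I expect a reader to scrutinize is whether the good-reduction condition can genuinely be imposed simultaneously for all squarefree $d$, and the uniformizer computation above is exactly what settles it — in particular no version of Theorem \ref{MT} with $\fY$ or with $v$ varying with $d$ is needed.
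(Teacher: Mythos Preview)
Your argument is correct, but it follows a different route from the paper's. Both reduce to Theorem~\ref{MT}; the difference is in how the good-reduction hypothesis is arranged uniformly in $d$. The paper stays over $\bQ$: it chooses the smallest odd prime $p\nmid\Delta(F)$ such that $F$ has no root in $\bP^1(\bF_p)$ (such primes exist by Chebotarev, since the Galois group of an irreducible polynomial always contains a fixed-point-free element). For $p\nmid d$ the twist $C_d$ has good reduction at $p$ and Theorem~\ref{MT} applies directly over $\bQ$; for $p\mid d$ a valuation argument shows $C_d(\bQ_p)=\emptyset$, so there is nothing to bound. Your approach instead absorbs the bad case by base-changing to $K=\bQ(\sqrt p)$, where the uniformizer $\sqrt p$ lets you untwist the factor of $p$ in $d$ and obtain good reduction at the ramified place $w$ for \emph{every} squarefree $d$ simultaneously.

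Each approach has its merits. The paper's argument is more elementary and yields a bound coming from Theorem~\ref{MT} over $\bQ$ itself, at the cost of needing the extra arithmetic input that $F$ has no $\bF_p$-root for some $p$. Your argument requires no such special prime---any odd prime of good reduction for $C_1$ will do---and treats all $d$ in one stroke, but invokes Theorem~\ref{MT} over the larger field $\bQ(\sqrt p)$, so the resulting constant is $N(\fY,\bQ(\sqrt p),g,w)$ rather than $N(\fY,\bQ,g,p)$. Your parenthetical ``hybrid'' variant (handle $p\nmid d$ over $\bQ$, $p\mid d$ over $\bQ(\sqrt p)$) is also fine and interpolates between the two.
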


\begin{proof} The form $F$ uniquely determines the complex isomorphism class and hence the genus, and in this case the field of definition is $K = \bQ$. Thus it suffices to explain why there is no dependence on an auxiliary prime $v$. To see this, choose the smallest odd prime $p \nmid \Delta(F)$ with the property that $F(x,y) \equiv 0 \pmod{p}$ has no solution. Note that $y^2 = F(u,v)$ has good reduction at $p$. Applying the Theorem with $v = p$ covers all $d$ with $p \nmid d$. For those $d$ divisible by $p$, our choice of $p$ implies that the twist cannot have rational points due to local obstruction. Since $p$ is uniquely determined by $F$, the dependency can be removed. 
\end{proof}

Corollary \ref{cor1} implies a weak version of a conjecture of C.~L.~Stewart in \cite{Stew}. In particular, the number of primitive solutions to the Thue equation
\begin{equation} \label{Thue} F(x,y) = h
\end{equation}
depends at most on $F$, provided that $\deg F \geq 6$ and $F$ is irreducible over $\bQ$. Currently, uniform bounds were only known when $h$ is fixed, due to Bombieri and Schmidt in \cite{BS} and later extended by Stewart \cite{Stew}. Stewart and the second author showed that Stewart's conjecture holds true on average in \cite{SX}. \\

We now give a heuristic as to why Conjecture \ref{conj} is plausible. Consider elliptic curves $E_{A,B}/\bQ$, given by the minimal Weierstrass model:
\begin{equation} \label{Weir} E_{A,B} : y^2 = x^3 + Ax + B, A,B \in \bZ, p^4 | A \Rightarrow p^6 \nmid B \text{ for all primes } p.
\end{equation}
We can then view $\bZ^2$ as a parameter space for the $E_{A,B}$'s. For a set $S$ of rational primes, a curve $E_{A,B}$ has good reduction outside of $S$ if the discriminant $\Delta(E_{A,B})$ is only divisible by primes in $S$. There are only a finite number of integers which are composed only of primes in $S$ which can be discriminants of elliptic curves in minimal Weierstrass model, say $D_1(S), \cdots, D_m(S)$. Thus, an elliptic curve $E_{A,B}$ has good reduction outside of $S$ only if $(A,B)$ is an integer point one of the Mordell curves
\begin{equation} \label{mor cur} 27y^2 = 4x^3 + D_j(S), j = 1, \cdots, m.
\end{equation}
Let $\C$ be a planar curve of genus $g \geq 2$, which corresponds to a 1-parameter family of complex elliptic curves. Then the number of integral intersection points of these curves with $\C$ is at most the number of integral points of $\C$, given as an affine plane curve.\\

One case where we may say something unconditionally is the case of $S$-unit equations. This is a well-studied problem, going back to the work of Siegel \cite{Sieg}. Kim gave a new proof of this finiteness result as an introduction to his non-abelian Chabauty method in \cite{Kim1}, and Lawrence and Venkatesh gave yet another proof in \cite{LV} as a proof-of-concept of their method. \\

Following \cite{LV} and our improvements made to their set-up, we prove the following uniform variant:

\begin{theorem} \label{SMT} Let $K$ be a number field which contains a primitive $8$-th root of unity. Let $S$ be a finite set of finite places of $K$ which contains all places above $2$. Let $m$ be the largest power of $2$ which divides the order of the group of roots of unity in $K$. Let $L$ be a fixed cyclic extension of $K$ of degree $m$. Choose a place $v \not \in S$ which is inert in $L$. Then there exists a number $N(K, L, v)$ such that for any $t_0 \pmod{v}$, the set
\[U_{1,L,v, t_0} = \{t \in \O_{S}^\ast : 1 - t \in \O_S^\ast, t \not\in (K^\ast)^2, K(t^{1/m}) \cong L, t \equiv t_0 \pmod{v}\}\]
we have $|U_{1,L,v,t_0}| \leq N(K,L,v)$. 
\end{theorem}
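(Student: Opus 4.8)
The plan is to follow the Lawrence–Venkatesh strategy for the $S$-unit equation, but to track uniformity in the parameters throughout. Given $t \in U_{1,L,v,t_0}$, one attaches a family of abelian varieties — concretely, the Jacobian of (a quotient of) the cyclic cover $y^m = x^a(1-x)^b t^c$ of $\bP^1$ branched over $\{0,1,\infty\}$ specialized via the $S$-unit $t$, i.e. the Fermat-type/superelliptic construction used in \cite{LV}. These abelian varieties have good reduction outside $S \cup \{v\}$ because $t$ and $1-t$ are $S$-units, and their complex isomorphism type is constant (this is the crucial point that makes the $S$-unit case unconditional): varying $t$ only varies the $\bQ$-structure, not the $\bC$-structure, so the set of Galois representations that can occur is \emph{a priori} finite \emph{and} its cardinality is controlled purely by $K$ and $S \cup \{v\}$ — this plays the role that Conjecture \ref{conj} plays in Theorem \ref{MT}, but here it is a genuine theorem of Faltings (Shafarevich-type finiteness), so no hypothesis is needed. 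The extra conditions $t \notin (K^\ast)^2$ and $K(t^{1/m}) \cong L$ are exactly what pins down the Kummer character of $t$, hence cut out a single component of the relevant moduli and let one work with one fixed family $\F$ rather than a moving target.

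Next I would set up the $p$-adic period map of \cite{LV}, using the place $v$: attach to each $t$ the filtered $\phi$-module $H^1_{\mathrm{dR}}$ of the fibre over $t \bmod v$, with its crystalline Frobenius fixed (because all the $t$ reduce to the same $t_0 \bmod v$, the Frobenius structure is \emph{identical} for all of them — this is why the congruence condition $t \equiv t_0 \pmod v$ is imposed and why it is harmless to impose it, since one later covers all residues by a finite union), while the Hodge filtration varies with $t$. This produces a point in a flag variety $\fH$ (the analogue of Proposition \ref{not zar}), and the key dichotomy is: either the period map has Zariski-dense image, in which case a single semisimple Galois representation class has only finitely many — in fact $O_{\fH,v}(1)$ many — preimages by an intersection-theoretic/Bézout argument in $\fH$; or the image is contained in a proper subvariety, which by a monodromy computation for the universal family over the thrice-punctured line (the monodromy is big, essentially by the classical computation of the monodromy of hypergeometric/Fermat families) cannot happen for $g$ in the relevant range. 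Combining: the number of $t$ mapping to a given Galois class is uniformly bounded, and the number of Galois classes is uniformly bounded, so $|U_{1,L,v,t_0}|$ is bounded by a function of $K, L, v$ alone, as claimed; taking the (finite!) union over residues $t_0 \bmod v$ and over the finitely many choices forced by $L$ would even remove those, recovering Evertse's qualitatively optimal bound.

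The main obstacle I expect is making the period-map fibre bound genuinely \emph{uniform} rather than merely finite: Lawrence–Venkatesh prove finiteness of fibres, but to get a bound independent of $t$ one must control the degree of the period map (or of the relevant Schubert-type cycles in $\fH$) uniformly, which in turn requires (i) that the source curve parametrizing the family and its monodromy data depend only on $(K,L,g)$ — handled by fixing $\F$ as above — and (ii) an effective form of the "not Zariski dense $\Rightarrow$ contradiction with big monodromy" step, i.e. one needs the codimension of any possible bad locus to be bounded below uniformly. A secondary technical point is the interface between the crystalline side at $v$ and the condition $t \equiv t_0$: one must check that fixing the residue really does fix the $\phi$-module up to the finitely many choices allowed, and that the Kummer-theoretic conditions ($t \notin (K^*)^2$, $K(t^{1/m}) \cong L$) are compatible with, and in fact needed to rigidify, the weight/Hodge–Tate data so that the target flag variety and the Galois-cohomological bound do not themselves grow with $t$. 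Once those uniformities are in place, the counting argument is the same Bézout estimate as in \cite{LV}, now with explicit (if unoptimized) constants.
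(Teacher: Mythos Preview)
Your period-map setup is essentially right, but the argument has a genuine gap at the step where you bound the number of Galois representations, and this gap is fatal for the uniformity claimed.

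First, the claim that ``their complex isomorphism type is constant\ldots varying $t$ only varies the $\bQ$-structure, not the $\bC$-structure'' is simply false: already for the Legendre family $y^2=x(x-1)(x-t)$ the $j$-invariant moves with $t$, and the same is true for the higher cyclic covers you propose. So nothing is gained here for free.

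Second, and more seriously, you bound the number of relevant Galois representations by appealing to Faltings' Shafarevich-type finiteness for abelian varieties with good reduction outside $S\cup\{v\}$. That finiteness is certainly a theorem, but the resulting bound \emph{depends on $S$}, whereas the statement requires a bound $N(K,L,v)$ independent of $S$. This is precisely the dependence the theorem is designed to eliminate; invoking Faltings here reproduces the non-uniform Lawrence--Venkatesh argument and proves nothing new. It is also not the case that the number of cyclic extensions $L$ is bounded independently of $S$ (the paper computes it as roughly $8^{|S|}$), so your closing remark about removing $L$ would not recover Evertse without that $S$-dependence.

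What the paper actually does to make this step unconditional is to work \emph{locally} at $v$ and replace the global Faltings input by a classification theorem for $2$-dimensional crystalline representations of $G_{K_v(t^{1/m})}$ with Hodge--Tate weights $\{0,1\}$ (Brinon--Conrad): such representations, when not a sum of characters, are determined by the characteristic polynomial of Frobenius. That polynomial depends only on the reduction of the elliptic curve modulo $v$, hence only on the residue field $k_v$, and there are only finitely many elliptic curves over $k_v$. This gives a bound on the number of possible local representations that depends only on $v$, not on $S$. The reducible case is handled separately (Lemma 4.4 of \cite{LV}) and again depends only on the family and on $v$. Once this $v$-only bound on representations is in hand, your period-map/monodromy argument (which is correct in outline) finishes the proof.
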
 

In other words, the number of solutions to the $S$-unit equation is uniformly bounded, up to the number of possibilities of cyclic extensions $L$ that can arise, which depends only on the cardinality of $S$. Such a dependency is unavoidable; see \cite{EST}. This theorem is quantitatively weaker than a theorem of Evertse \cite{Eve}, but is qualitatively similar. \\

In fact, we can give a concrete bound on the number of fields $L = K(t^{1/m})$ that can arise. Note that for any such $L$, the (fractional) principal ideal $\left(t^{1/m}\right)$ factors into prime ideals that are totally ramified over $K$, and hence the prime divisors of $(t^{1/m})$ are in one-to-one correspondence to the prime ideals in $\O_K$ which occur in the factorization of $(t)$ into a fractional ideal. There are now two situations: for those primes co-prime with $m$ only tame ramification is possible, and for each such prime $\fp \in S$, the possible exponents occurring in $(t)$ are (up to equivalence) $0, \cdots, m-1$. \\

In our argument we may fix $m = 8$, which we do so now. If we write $(t) = \fp_1^{\ell_1} \cdots \fp_{|S|}^{\ell_{|S|}}$, then for each $1 \leq k \leq |S|$ there are $8$ possibilities for $\ell_k$, and thus there are $8^{|S|}$ possibilities for $(t)$, assuming there is no wild ramification. To account for possible wild ramification we need to multiply by an absolute constant $C_0$. This gives that there is a positive number $C_0$ such that the number of possible fields $L$ that may occur is at most $C_0 \cdot 8^{|S|}$. \\

The dependence on the specific field $L$ in Theorem \ref{SMT} is quite mild, in the sense that we only require $v$ to be inert in $L$. Fixing such a prime and choosing only fields $L = K(t^{1/m})$ for which $v$ is inert in $L$, we obtain a bound of the shape
\begin{equation} \# \{t \in \O_{K,S}^\ast : 1 - t \in \O_{K,S}^\ast, v \text{ inert in } K(t^{1/m}) \} \leq N(K,v) 8^{|S|} \end{equation}
for some number $N(K,v)$ which only depends on $K$ and $v$. \\

The main thrust behind Theorem \ref{SMT} is that we can replace Conjecture \ref{conj} with a statement about 2-dimensional representations of $\Gal(\ol{\bQ_p}/K_v)$. Indeed, 2-dimensional crystalline representations of $\Gal(\ol{\bQ_p}/K_v)$ with Hodge-Tate weight $\{0,1\}$ are completely determined by their characteristic polynomial.

\subsection*{Acknowledgements} We thank Lucia Mocz for discussions that formed the genesis of this project. We thank Minhyong Kim for meeting with us during his visit to the Perimeter Institute and discussing relevant ideas, which ultimately gave rise to the current project. We thank him again for reading earlier versions of this manuscript which improved the quality immeasurably.

\section{The general set up}
\label{gen set}

Here we begin discussing the general framework that was described in the \cite{LV} paper. We work in somewhat larger generality than what is needed in \cite{LV}. As we proceed further we will add extra assumptions as needed. Hopefully, this will clarify the ideas we are using. \\

Suppose that we have a smooth morphism

\[\psi \colon X\ra Y\]
of varieties over a number field $K$ of relative dimension $d$. We are primarily interested in the case when $\Psi\colon X\ra Y$ is given the structure of a semi-abelian scheme. This has the benefit of making the types of Galois representations arising as the typical ones coming from the representation of $\Gal(\ol{\bQ}/K)$ on Tate modules of abelian varieties, for some prime $p$.  Suppose further that we have a set $S$ of primes such that $\psi$ extends to a smooth morphism of $\O_S$ schemes, say

\[\Psi\colon \X\ra \Y.\]

Choose a prime $v$ outside of $S$, lying above $p \in \bQ$, and such that \begin{enumerate}
	\item $p>2$;
	\item $p$ does not lie below any prime in $S$; and
	\item $Y$ has good reduction at $v$. 
	\end{enumerate} 
For each $y\in \Y(\O)$ we have a a Galois representation $\rho_y$ associated to the action of the absolute Galois group $G_K$ on $H^q(X_y\times_K\bar{K},\bQ_p )$. In fact more can be said: if $y\in \mathcal{Y}(\O)$ then $\rho_y$ is a \emph{crystalline} representation when restricted to $K_v$. This follows from the assumption that we have a good model. With these restrictions we obtain a map

\begin{equation} \label{rep map} r_{\Psi,v,q}=r\colon Y(\O)\ra \{\textnormal{crystalline representations of }G_{K_v}\textnormal{ on }\bQ_p\textnormal{ vector spaces}\}\end{equation}
that sends 
\[y\mapsto \rho_y\mid_{G_{K_v}}.\]
On the other hand, $p$-adic Hodge theory gives a map

\[\textnormal{crys}\colon \{\textnormal{crystalline representations of }G_{K_v}\textnormal{ on }\bQ_p\textnormal{ vector spaces}\}\ra \F\L.\] 
Here $\F\L$ is the category of triples $(W,\phi,F)$ , where $W$ is a $K_v$ vector space, $\phi$ a Frobenius semi-linear automorphism of $W$, and $F$ a descending filtration on $W$. The crystalline comparison theorem then shows that for $y\in Y(\O)$ we have

\[\crys\circ r(y)=(H^q(X_y/K_v,\textnormal{Fr}_v),\textnormal{Hodge Filtration for }X_y)\]
That is, the image of $\rho_y$ inside $\F\L$ is $(H^q(X_y/K_v,\textnormal{Fr}_v),\textnormal{Hodge Filtration for }X_y)$. The comparison theorem with lifts of crystalline cohomology gives that the triple \[(H^q(X_y/K_v,\textnormal{Fr}_v),\textnormal{Hodge Filtration for }X_y)\cong (V_v,\phi_v,\Phi_v(y)) \] where $V_v$ is a lift of the crystalline cohomology, and $\phi_v$ is the canonical Frobenius operator. From the mapping $\Psi$ we have essentially constructed a category of linear algebraic data, which we may hope to study in lieu of studying $Y$ itself. Since we are now working with a category of linear algebraic data over a local field, we study how points that are $v$-adically close behaves. To this end we consider those points of $\F\L$ that are close to point of $\mathcal{R}$. To do this in a controlled way we consider residues disks on $Y(\O)$ and \emph{period mappings} induced by the crystalline comparison theorem. \\

Fix $y_0\in Y(\O)$ and put $\Omega_v(y_0)=\{y\in Y(\O_v)\colon y\equiv y_0\pmod{ v}\}$, the residue disk of $Y$ at $y_0$. If $y\in Y(\O_v)$ and $y\equiv y_0 \pmod{v}$ then the Gauss-Manin connection gives an isomorohism of vector spaces

\begin{equation} \label{GM} \GM \colon H^q(X_y/K_v)\cong H^q(X_{y_0}/K_v). \end{equation}
We remark that when $Y$ is defined over a number field $K$, that the Gauss-Manin connection (\ref{GM}) is given by a power series with $K$-coefficients. \\

The map (\ref{GM}) preserves the Frobenius mapping $\phi_v$ but varies the Hodge filtration, since the latter is dependent on the particular lift to $K_{v}$ and the former depends only on the residue class modulo $v$. We assume that all the Hodge filtrations constructed from $\Psi$ have the same dimensional data, since all such filtrations arising from global representations necessarily satisfy this property. We then obtain a morphism 

\[\phi_{y_0}\colon \Omega_{v}(y_0)\ra \HH(K_v)\] that takes the Hodge filtration associated to $X_y$ to its image under the Gauss-Manin connection. We obtain a map

\[\Phi_{\pi,v,q,y_0}=\Phi\colon \Omega_v(y_0)\ra \F\L,y\mapsto (V_v,\phi_v,\phi_{y_0}(y))\]

The projection unto the first factor gives recovers the so called \emph{period mapping}
\[\phi_{y_0}\colon \Omega_v(y_0)\ra\HHH(K_v),y\mapsto \phi_{y_0}(y)\]
where $\HHH(K_v)$ is the flag variety parametrizing the subspaces with the correct data. Set \[\mathfrak{R}_{\pi,v,q,y_0}=\mathfrak{R}=\Phi(\Omega_v(y_0))\]

Let $\S$ be the category of $K_v$ vector spaces with a Frobenius semi-linear operator. That is, the category whose objects are pairs $(V,\phi)$ where $V$ is a finite dimensional $K_v$ vector space and $\phi$ is a Frobenius semi-linear operator on $V$. The morphisms in $\S$ are given by linear maps that respect the semi-linear operators. Notice that $\F\L$ comes with a natural forgetful map

\[p_{12}\colon \F\L\ra \S,(V,\phi,F)\mapsto (V,\phi)\]

\begin{proposition}
Let $\mathfrak{R}$ be as above. Then $\mathfrak{R}$ lies in a fiber of $p_{12}$.
\end{proposition}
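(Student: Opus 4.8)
The plan is to unwind the definitions of the two maps $\Phi$ and $p_{12}$ and observe that the composite $p_{12}\circ\Phi$ is literally constant on the residue disk $\Omega_v(y_0)$. By construction, $\Phi$ sends a point $y\in\Omega_v(y_0)$ to the triple $(V_v,\phi_v,\phi_{y_0}(y))\in\F\L$, where $V_v$ is a fixed lift of the crystalline cohomology and $\phi_v$ is the canonical Frobenius operator on it; crucially, neither $V_v$ nor $\phi_v$ depends on $y$ — only the filtration component $\phi_{y_0}(y)$ varies. Applying the forgetful functor $p_{12}\colon(V,\phi,F)\mapsto(V,\phi)$ therefore kills the only $y$-dependent piece, so $p_{12}(\Phi(y))=(V_v,\phi_v)$ for every $y\in\Omega_v(y_0)$. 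Hence $\fR=\Phi(\Omega_v(y_0))$ is contained in the single fiber $p_{12}^{-1}(V_v,\phi_v)$.

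The one point that deserves a careful word — and which I expect to be the only real content of the proof — is why it is legitimate to take the \emph{same} underlying pair $(V_v,\phi_v)$ for all $y$ in the disk, rather than merely pairs that happen to be abstractly isomorphic. This is exactly the role of the Gauss–Manin connection (\ref{GM}): for $y\equiv y_0\pmod v$ it furnishes a canonical isomorphism $\GM\colon H^q(X_y/K_v)\xrightarrow{\sim}H^q(X_{y_0}/K_v)$ which, as recorded in the excerpt, preserves the Frobenius $\phi_v$ and transports only the Hodge filtration. Thus after identifying everything with $H^q(X_{y_0}/K_v)$ via $\GM$, the Frobenius-equipped vector space is rigidly the same object $(V_v,\phi_v)$, and the period map $\phi_{y_0}$ was defined precisely so as to record the image of the Hodge filtration of $X_y$ under this identification inside the flag variety $\HHH(K_v)$. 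So the factorization $\Phi=(\,(V_v,\phi_v)\,,\,\phi_{y_0}(\cdot)\,)$ is exactly how $\Phi$ was built, and the proposition is the statement that the first coordinate is constant.

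I would therefore present the argument in two short steps: first, recall from the setup that $\Phi(y)=(V_v,\phi_v,\phi_{y_0}(y))$ with $(V_v,\phi_v)$ independent of $y\in\Omega_v(y_0)$, citing the Frobenius-invariance of $\GM$ for the independence claim; second, compute $p_{12}(\Phi(y))=(V_v,\phi_v)$ and conclude that $\fR\subseteq p_{12}^{-1}\bigl((V_v,\phi_v)\bigr)$, a single fiber. No estimates, no $p$-adic analysis, and no input from the crystalline comparison theorem beyond what has already been used to define $\Phi$ are needed; the statement is essentially a formal consequence of the construction, and the ``main obstacle'' is simply making the bookkeeping of the identifications explicit enough that the constancy of the first coordinate is manifest.
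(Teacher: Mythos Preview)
Your proof is correct and follows exactly the same approach as the paper: both simply observe that every element of $\fR$ has the form $(V_v,\phi_v,\phi_{y_0}(y))$ with the first two coordinates fixed, hence $\fR\subseteq p_{12}^{-1}(V_v,\phi_v)$. Your additional paragraph explaining the role of the Gauss--Manin connection in making $(V_v,\phi_v)$ literally independent of $y$ is accurate but more than the paper bothers to say; the paper dispatches the whole thing in one sentence by appealing to ``construction.''
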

\begin{proof}
By construction, all the elements of $\mathfrak{R}$ are of the form
\[(V_v,\phi_v,\phi_{y_0}(y))\]
for some $y\in \Omega_v(y_0)$. Thus $\mathfrak{R}\subseteq p_{12}^{-1}(V_v,\phi_v)$.
\end{proof}

Thus the natural category of linear algebraic data to work with is the full fiber over $(V_v,\phi_v)$ which we will denote $\mathfrak{F}$. Given a point $x\in \mathfrak{F}$ the collection of $x^\prime\in \fF$ isomorphic to $x$ is given by the collection of all linear automorphisms of $V_v$ which commute with $\phi_v$. We call this the centralizer of $\phi_v$ and denote it $Z(\phi_v)$. Let $p_3\colon \fF\ra \HH(K_v)$, and take to $[x]$ be the collection of elements in $\fF$ which are isomorphic to $x$. We have  \[[x]=\{(V_v,\phi_v,T(x):T\in Z(\phi_v)\}=p_3^{-1}(Z(\phi_v)\cdot p_3(x) )\]
Even more explicitly, we have that the orbit of $p_3(x)$ in $\HH(K_v)$ can be described as \[Z(\phi)\cdot p_3(x)\cong Z(\phi_v)/\textnormal{Stab}_{Z(\phi_v)}(p_3(x)):=\G_x\]
Then 
\[[x]=p_3^{-1}(G_x)\]  so that the isomorphism classes of a point in $\fF$ arise as fibres over certain linear algebraic groups in a space of flags a general point $x\in \fF$. In fact, the period morphism $\phi_{y_0}$ is a $K_v$ analytic mapping and our most interesting results will be about the fibres 
\[\fF_y=\phi_{y_0}^{-1}(\G_{\Phi(y)}).\]
In particular when $Y$ is a curve we will show that $\fF_y$ is finite, and give upper bounds on the size of these fibres with limited dependencies, as to give us freedom to vary over a family uniformly. So far we have only been using the fact that we have a non-archimedean place of good reduction. Now pick an archimedean place $\iota$ and consider the monodromy representation

\[\mu_{\iota,q}=\mu\colon \pi_1(Y(\bC),y_0)\ra \textnormal{GL}(H^q(X_{y_0}\times_K \bar{K},\bar{\bQ})\times_K\bC)\]

We take \[\Gamma_\mu=\Gamma=\textnormal{Zariski closure of }\textnormal{image}(\mu)\]

Now let $h_0$ be the Hodge filtration coming from the complex de Rham cohomology $H^q(X_{y_0}(\bC)/\bC)$. Thus $h_0\in \HH(\bC)$ where $\HH$ is the flag variety parametrizing the appropriate dimensional data. We set

\[D_\mu=\dim_\bC \Gamma_\mu\cdot h_0\]

We may now state our first result.

\begin{lemma}
With the notation above we have that \[\dim\overline{\phi_{y_0}(\Omega_v(y_0))}\geq D_\mu.\]
In words, the Zariski closure of the image of $\phi_{y_0}$ has dimension at least the dimension of $\Gamma_\mu\cdot h_0$. In particular, if $W\subseteq \HH_v$ is closed with $\dim W<D_\mu$  then $\phi_{y_0}^{-1}(W)$ is contained in a proper Zariski closed subset of $\Omega_v(y_0)$. 

\end{lemma}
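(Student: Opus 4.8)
The plan is to compare the $p$-adic period map $\phi_{y_0}$ with the complex period map on the same residue disk, exploiting the fact that the Gauss–Manin connection (\ref{GM}) is defined by a power series with $K$-coefficients, so that "the same" formal object computes both the archimedean and the $v$-adic period maps. First I would set up the formal/analytic period map: near $y_0$ the Hodge filtration, transported by the Gauss–Manin connection, is given by a power series $\Phi(y) \in \HHH$ with coefficients in $K$ (indeed in the local ring of $Y$ at $y_0$, which embeds in $K[[t_1,\dots,t_n]]$ after choosing local coordinates). This single power series, when evaluated $v$-adically on $\Omega_v(y_0)$, gives $\phi_{y_0}$, and when evaluated in a complex neighbourhood of $y_0$ (via the chosen archimedean place $\iota$) gives the classical period map. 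Consequently the Zariski closure of the image is the same algebraic variety in either case: $\overline{\phi_{y_0}(\Omega_v(y_0))} = \overline{\operatorname{image of the complex period map}}$ as subvarieties of $\HHH$, since both equal the Zariski closure of the image of the formal germ, and a $K$-variety's dimension does not depend on which field we base-change to.

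Next I would invoke the theorem of the fixed part / the structure of variations of Hodge structure to identify the complex side with $\Gamma_\mu \cdot h_0$. The key input is that the image of the complex period map, locally near $y_0$, is governed by the monodromy: by the global-to-local analysis of period maps (the differential of the period map has image controlled by the Lie algebra of $\Gamma_\mu$, via the infinitesimal orbit of the Hodge filtration, cf. the arguments in \cite{LV}), the analytic germ of the period map at $y_0$ is contained in the orbit $\Gamma_\mu \cdot h_0$, and moreover its Zariski closure has dimension exactly $D_\mu = \dim_{\bC} \Gamma_\mu \cdot h_0$. Combining this with the previous paragraph yields
\[
\dim \overline{\phi_{y_0}(\Omega_v(y_0))} = \dim \overline{\Gamma_\mu \cdot h_0 \text{-part of image}} \geq D_\mu,
\]
which is the first assertion. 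For the "in particular" clause: if $W \subseteq \HHH_v$ is Zariski closed with $\dim W < D_\mu$, then $\phi_{y_0}^{-1}(W)$ cannot be all of $\Omega_v(y_0)$, for otherwise the image of $\phi_{y_0}$ — and hence its Zariski closure, of dimension $\geq D_\mu$ — would be contained in $W$, contradicting $\dim W < D_\mu$; since $\phi_{y_0}$ is $K_v$-analytic on $\Omega_v(y_0)$ and $\Omega_v(y_0)$ is irreducible as an analytic space, the preimage of a proper closed subvariety is a proper analytic (hence Zariski-closed, after using that it is cut out by convergent power series that are not identically zero) subset.

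The main obstacle I anticipate is making the comparison in the first paragraph fully rigorous: one must check that the power series expressing the Gauss–Manin transport genuinely has the same coefficients on both sides, i.e. that the algebraic de Rham cohomology with its Hodge filtration and Gauss–Manin connection, all defined over $K$, base-change compatibly to $\bC$ (giving the classical VHS) and to $K_v$ (giving the $p$-adic period map), and that "Zariski closure of the image of an analytic germ of an algebraic map" is insensitive to the choice of embedding $K \hookrightarrow \bC$ or $K \hookrightarrow K_v$. This is essentially the content of the fact that the period map is "motivic" / algebraic in origin, and the dimension of the Zariski closure of its image is a purely algebraic invariant; the subtlety is convergence — one needs the power series to converge on a genuine $v$-adic residue disk, which is where hypotheses (1)–(3) on $v$ and the good-reduction assumption enter, bounding denominators so that the Gauss–Manin power series converges $v$-adically on all of $\Omega_v(y_0)$. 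A secondary technical point is verifying that $D_\mu$, computed as $\dim_\bC \Gamma_\mu \cdot h_0$, is indeed the dimension of the Zariski closure of the complex period image and not merely a lower bound coming from the infinitesimal period relation; this is standard for VHS (the image of the period map is a locally closed subvariety of $\Gamma_\mu \backslash \HHH$ of the expected dimension) but should be cited carefully.
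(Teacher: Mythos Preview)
Your overall strategy---comparing the $v$-adic and complex period maps via a common $K$-rational power series expansion of the Gauss--Manin connection, and then reading off the dimension on the complex side---is exactly the approach of Lemma~3.1 (and the preparatory Lemma~3.2) in \cite{LV}, which is all the paper invokes. So the first paragraph of your plan is correct and matches the intended argument.

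However, your second paragraph contains a genuine error in the direction of containment. You write that ``the analytic germ of the period map at $y_0$ is contained in the orbit $\Gamma_\mu\cdot h_0$'' and justify this by saying the differential of the period map is ``controlled by the Lie algebra of $\Gamma_\mu$.'' This is backwards for the purpose at hand: an infinitesimal constraint of that type would give an \emph{upper} bound on the dimension of the image, not the lower bound $\geq D_\mu$ that the lemma asserts. (And in fact the image of the period map is not in general contained in a single monodromy orbit.) The correct argument on the complex side runs the other way: the Zariski closure of the image of the complex period map $\tilde{Y}\to\HH$ is invariant under the monodromy action of $\pi_1(Y(\bC),y_0)$ (analytic continuation along loops permutes points of the image by monodromy, and Zariski closure is preserved), hence invariant under its Zariski closure $\Gamma_\mu$; since it contains $h_0$, it contains the whole orbit $\Gamma_\mu\cdot h_0$, whence its dimension is $\geq D_\mu$. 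With this correction your outline goes through, and the ``in particular'' clause follows exactly as you say.
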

\begin{proof}
This is a consequence of Lemma 3.1 in \cite{LV}.
\end{proof}

The main goal of this section is to prove the following refinement of Proposition 3.3 in \cite{LV}:

\begin{proposition} \label{not zar} Let $X \rightarrow Y$ be a smooth, proper family over a number field $K$, $V_q$ the degree $q$ de Rham cohomology of a given fibre $X_0$ above $y_0 \in Y(K)$, $\HH$ a space of flags in $V_q$, 
\[\Phi_v : \{y \in \Y(\O_v) : y \equiv y_0\} \rightarrow \HH(K_v)\]
the $v$-adic period mapping as defined above, $\Gamma \subset \GL(V_q(\bC))$ the Zariski closure of the monodromy group, and $h_0 = \Phi(y_0)$ is the image of $y_0$ under the period mapping. If 
\[\dim_{K_v} \left(Z\left(\phi_v^{[K_v : \bQ_p]} \right) \right) < \dim_{\bC} \Gamma \cdot h_0^\iota,\]
where $Z(\cdot)$ denotes the centralizer, in $\Aut_{K_v}(V_q(K_v))$, of the $K_v$-linear operator $\phi_v^{[K_v : \bQ_p]}$, then for a fixed crystalline representation $\rho$ of $G_{K_v}$ the set
\[\{y \in Y(\O) : y \equiv y_0 \pmod{v}, \rho_y \cong \rho\}\]
is contained in a proper algebraic subvariety of the residue disk $Y(K_v)$ at $y_0$. Moreover, the cardinality depends only on the field $K$, the prime $v$, and the $K_v$-isomorphism class of $X \rightarrow Y$.
\end{proposition}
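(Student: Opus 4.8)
The plan is to combine the dimension-counting Lemma (the one derived from Lemma 3.1 of \cite{LV}) with the orbit description of isomorphism classes in $\fF$ worked out above. First I would observe that, by the crystalline comparison theorem, two points $y, y' \in \Omega_v(y_0)$ with $\rho_y \cong \rho_{y'}$ as $G_{K_v}$-representations must have $\crys(\rho_y) \cong \crys(\rho_{y'})$ in $\F\L$; since both lie in the fibre $\fF$ over the fixed pair $(V_v, \phi_v)$, this isomorphism is realized by an element of the centralizer $Z(\phi_v)$ acting on the Hodge filtration. Hence all $y$ with $\rho_y \cong \rho$ map, under the period morphism $\phi_{y_0}$, into a single orbit $\G_{\Phi(y)} = Z(\phi_v) \cdot p_3(\Phi(y))$ inside $\HH(K_v)$. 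The subtlety is that $\phi_v$ is only Frobenius-semilinear, so $Z(\phi_v)$ is not an honest algebraic group over $K_v$; the standard fix, exactly as in \cite{LV}, is to pass to $\phi_v^{[K_v:\bQ_p]}$, which is $K_v$-linear, so that $Z(\phi_v) \subseteq Z(\phi_v^{[K_v:\bQ_p]})$ and the latter is a genuine algebraic subgroup of $\Aut_{K_v}(V_q(K_v))$. Therefore the orbit $\G_{\Phi(y)}$ is contained in the $Z(\phi_v^{[K_v:\bQ_p]})$-orbit of $p_3(\Phi(y))$, a locally closed subvariety of $\HH$ of dimension at most $\dim_{K_v} Z(\phi_v^{[K_v:\bQ_p]})$.

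Next I would apply the hypothesis $\dim_{K_v} Z(\phi_v^{[K_v:\bQ_p]}) < \dim_{\bC} \Gamma \cdot h_0^\iota = D_\mu$, together with the Lemma above, which says $\dim \overline{\phi_{y_0}(\Omega_v(y_0))} \geq D_\mu$ and, more precisely, that the preimage under $\phi_{y_0}$ of any closed $W \subset \HH_v$ with $\dim W < D_\mu$ is contained in a proper Zariski-closed subset of $\Omega_v(y_0)$. Taking $W$ to be the Zariski closure of the orbit $Z(\phi_v^{[K_v:\bQ_p]}) \cdot p_3(\Phi(y))$ — whose dimension is $< D_\mu$ by hypothesis — we conclude that $\{y \in Y(\O) : y \equiv y_0 \pmod v, \ \rho_y \cong \rho\}$, being contained in $\phi_{y_0}^{-1}(W)$, lies in a proper algebraic subvariety of the residue disk $Y(K_v)$ at $y_0$. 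When $Y$ is a curve this forces the set to be finite.

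For the effectivity claim — that the cardinality depends only on $K$, $v$, and the $K_v$-isomorphism class of $X \to Y$ — I would argue that the period morphism $\phi_{y_0}$ is given by a convergent power series whose coefficients come from the Gauss–Manin connection, hence are determined by $X \to Y$ over $K_v$; the flag variety $\HH$, the operator $\phi_v^{[K_v:\bQ_p]}$, and the subvariety $W$ are likewise determined by this data. The number of zeros of the resulting system of $p$-adic analytic functions on a residue disk is bounded by the Newton-polygon / Weierstrass-preparation count, which depends only on the degrees and $v$-adic sizes of the coefficients of $\phi_{y_0}$ and the defining equations of $W$; crucially, replacing $\rho$ by an isomorphic representation leaves $W$ unchanged, so the bound is uniform over the choice of $\rho$. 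I would also note that the number of residue disks is $\#Y(\mathbb{F}_v)$, bounded by the Lang–Weil / Weil estimates in terms of $g$ and $v$, so summing over disks still yields a bound with the stated dependencies.

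The main obstacle I expect is the last paragraph: making the zero-count of the $p$-adic period map genuinely uniform. One must control the $v$-adic radii of convergence and the coefficient sizes of $\phi_{y_0}$ uniformly across all residue disks $y_0 \in Y(\mathbb{F}_v)$, and one must ensure that the ``proper algebraic subvariety'' produced by the Lemma can be taken of bounded degree independent of $\rho$ — this requires knowing that the orbit closures $W$ have degree bounded in terms of the algebraic group $Z(\phi_v^{[K_v:\bQ_p]})$ and $\HH$ alone, which in turn should follow from the fact that there are only finitely many conjugacy classes of such orbits. Handling the semilinearity correctly (the index $[K_v:\bQ_p]$ and the relation between $Z(\phi_v)$ and $Z(\phi_v^{[K_v:\bQ_p]})$) is a technical point that must be done carefully but follows the template in \cite{LV}.
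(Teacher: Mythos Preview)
Your reduction of the problem is correct and matches the paper: points with $\rho_y \cong \rho$ have isomorphic filtered $\phi$-modules, hence their period images lie in a single $Z(\phi_v)$-orbit, which is contained in the $K_v$-algebraic orbit of the linear operator $\phi_v^{[K_v:\bQ_p]}$; the dimension hypothesis together with the lemma comparing $\dim \overline{\phi_{y_0}(\Omega_v)}$ to $D_\mu$ then gives the containment in a proper Zariski-closed subset. This is exactly the setup the paper uses.

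Where your proposal diverges from the paper --- and where it has a genuine gap --- is the uniformity statement. Your sketch via Newton polygons and Weierstrass preparation would require bounding, uniformly in $y_0$ and $\rho$, the \emph{degree} of the proper subvariety produced by the dimension lemma; but that lemma is purely existential and gives no such control, and Weierstrass preparation by itself only bounds zeros once the degree of the relevant polynomial is known. You correctly flag this as the main obstacle, but your proposed fix (finitely many conjugacy classes of orbits) does not by itself produce regular functions of bounded complexity cutting out $\phi_{y_0}^{-1}(W)$.

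The paper takes a quite different route for this step. It invokes the Bakker--Tsimerman Ax--Schanuel theorem over $\bC$ (Lemma~\ref{BT lem}), then transfers to the $v$-adic side by choosing an isomorphism $\sigma:\overline{\bQ_p}\to\bC$: the $G_i = F_i \circ \Phi_v$ cutting out the preimage are carried to power series whose common zero locus near $y_0^\sigma$ equals $\Phi_\bC^{-1}(Z^\sigma)$, and Bakker--Tsimerman forces this to lie in the zero set of an \emph{algebraic} function $G$. Pulling $G$ back and using Krull's intersection theorem in the Tate algebra, one shows every minimal prime $\fp_j$ over $(G_1,\dots,G_r)$ contains a regular function $H_j \in B_v$; the fibre is then contained in $V(H_1\cdots H_t)$. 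The uniformity comes from two sources: the number $t$ of minimal primes depends only on the orbit $\G_{\Phi(y)}$ and the period map, and the O-minimal nature of the Bakker--Tsimerman argument makes the finiteness it produces uniform. This transcendence-plus-transfer mechanism is the missing idea in your proposal.
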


The proof of Proposition \ref{not zar} is a consequence of the following lemma, which is essentially Theorem 9.1 and Corollary 9.2 in \cite{LV}: 

\begin{lemma} \label{BT lem} Suppose $V \subset Y \times \fH^\ast$ is an algebraic set. Write $W$ for the image of $\tilde{Y}$, the universal cover of $Y(\bC)$, in $Y \times \fH$. Suppose that $U \subset V \times W$ is an irreducible analytic set such that
\[\codim_{Y \times \fH^\ast} U < \codim_{Y \times \fH^\ast} V + \codim_{Y \times \fH^\ast} W,\]
where all codimensions are tkane inside $Y \times \fH^\ast$. Then the projection of $U$ to $Y$ is contained in a proper weak Mumford-Tate subvariety. \\

If $Z \subset \fH^\ast$ is an algebraic subvariety and 
\begin{equation} \label{codim Z} \codim_{\fH^\ast} (Z) \geq \dim (Y),
\end{equation}
then any irreducible component of $\Phi^{-1}(Z)$ is contained inside the preimage, in $\tilde{Y}$, of the complex points of a proper subvariety of $Y$.
\end{lemma}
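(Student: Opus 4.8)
The plan is to derive Lemma~\ref{BT lem} from the Bakker--Tsimerman-type transcendence results that underlie Theorems~9.1 and~9.2 of \cite{LV}, adapting them to the setting where the target of the period map is a flag variety $\fH^\ast$ rather than a period domain in the classical sense. First I would set up the relevant objects precisely: $Y$ is our base curve (or variety), $\tilde Y \to Y(\bC)$ its universal cover, and $\Phi : \tilde Y \to \fH^\ast$ the (multivalued on $Y$, single-valued on $\tilde Y$) period map, whose image $W \subset Y \times \fH$ is the graph of $\Phi$ pushed down appropriately. The monodromy group $\Gamma$ acts on $\fH^\ast$, and a \emph{weak Mumford--Tate subvariety} of $Y$ is, by definition, the image under the projection $Y \times \fH^\ast \to Y$ of a component of the intersection of $W$ with a $\Gamma$-translate of an algebraic subvariety of $Y \times \fH^\ast$ that is defined by linear-algebraic (Hodge-theoretic) constraints; this is exactly the notion for which the Ax--Schanuel / Bakker--Tsimerman machinery produces the dimension (equivalently codimension) inequality.

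The first assertion is essentially the Ax--Schanuel theorem for the period map: given an algebraic $V \subset Y \times \fH^\ast$ and the analytic $W$, any irreducible analytic component $U$ of $V \cap W$ whose codimension is strictly smaller than $\codim V + \codim W$ (i.e.\ the intersection is \emph{atypical}) must project into a proper weak Mumford--Tate subvariety of $Y$. I would cite this as the content of Theorem~9.1 in \cite{LV} (which in turn rests on the work of Bakker--Tsimerman on the Ax--Schanuel conjecture for variations of Hodge structure), checking only that the hypotheses we need --- properness and smoothness of $X \to Y$, polarizability of the VHS on $H^q$ --- are in force in our situation, which they are by assumption in Proposition~\ref{not zar}.

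For the second assertion I would take $Z \subset \fH^\ast$ algebraic with $\codim_{\fH^\ast}(Z) \geq \dim(Y)$ and set $V = Y \times Z \subset Y \times \fH^\ast$, so that $\codim_{Y\times\fH^\ast} V = \codim_{\fH^\ast} Z \geq \dim Y$. Since $W$ is the image of $\tilde Y$ it has dimension $\dim Y$, hence $\codim_{Y \times \fH^\ast} W = \dim \fH^\ast$. An irreducible component $U$ of $\Phi^{-1}(Z)$, viewed inside $\tilde Y$ and then inside $Y \times \fH^\ast$ via the graph, is a component of $V \cap W$; one computes $\codim_{Y\times\fH^\ast} U = \codim_{Y\times\fH^\ast} W + (\text{something} \le \dim Y - \dim U')$ where $U'$ is the image in $Y$. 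If $U$ is not contained in the preimage of a proper subvariety of $Y$, then its image in $Y$ is all of $Y$, forcing $\dim U = \dim Y = \dim W$, which makes the intersection atypical precisely because $\codim V \geq \dim Y > 0$; applying the first part then shows the image lies in a proper weak Mumford--Tate subvariety, contradicting surjectivity onto $Y$ unless that subvariety is $Y$ itself --- but a weak Mumford--Tate subvariety equal to $Y$ would force $Z$ to meet $W$ in the typical way, contradicting the codimension count. Hence $U$ lies in the preimage of a proper subvariety of $Y$, as claimed.

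The main obstacle I anticipate is bookkeeping the codimension inequality correctly when $\fH^\ast$ is a flag variety that need not be the full period domain: one must be careful that the ``expected dimension'' count used in the Ax--Schanuel statement is taken inside $Y \times \fH^\ast$ consistently, and that the notion of weak Mumford--Tate subvariety is the one compatible with $\fH^\ast$ rather than the Griffiths period domain. A secondary subtlety is that $\Phi$ is only defined on a residue disk / on $\tilde Y$ and is a $p$-adic-to-complex comparison in spirit; here, though, we are working purely with the complex-analytic monodromy picture for this lemma, so the $p$-adic period map enters only later in deducing Proposition~\ref{not zar}, and I would keep those two roles cleanly separated. Modulo citing \cite{LV} for the transcendence input, the argument above is essentially a formal consequence of the atypical-intersection dichotomy.
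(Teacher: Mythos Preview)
Your approach is essentially the same as the paper's: the first assertion is simply the Bakker--Tsimerman Ax--Schanuel theorem (cited, not reproved), and for the second one sets $V = Y \times Z$, computes $\codim_{Y\times\fH^\ast} V = \codim_{\fH^\ast} Z$ and $\codim_{Y\times\fH^\ast} W = \dim \fH^\ast$, and feeds the resulting inequality into the first part.

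The only difference is in execution. The paper argues directly rather than by contradiction: taking $U$ an irreducible component of $W \cap (Y\times Z)$ containing the image of $Q$, one has $\codim_{Y\times\fH^\ast} U = \dim Y + \dim\fH^\ast - \dim U$, while $\codim V + \codim W \geq \dim Y + \dim\fH^\ast$. Hence either $\dim U = 0$ (and the projection is a point), or $\dim U > 0$ forces the strict inequality $\codim U < \codim V + \codim W$, so part one applies. Your contradiction route (assume the projection is Zariski dense, hence $\dim U = \dim Y$) reaches the same place but is a bit roundabout, and your final worry about ``a weak Mumford--Tate subvariety equal to $Y$'' is unnecessary: the conclusion of the first part already says \emph{proper} weak Mumford--Tate subvariety, so the contradiction is immediate. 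Your intermediate line ``$\codim U = \codim W + (\text{something}\le \dim Y - \dim U')$'' is also not the right bookkeeping; the clean statement is just $\codim U = \dim Y + \dim\fH^\ast - \dim U$.
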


\begin{proof} Let $Q$ be an irreducible component of $\Phi^{-1}(Z)$. Let $V = Y \times Z$. The intersection $W^Z$ of $W$ with $Y \times (Z \cap \fH)$, taken inside $Y \times \fH$, is an analytic set. Moreover, the image of $Q$ under the analytic map $\tilde{Y} \rightarrow Y \times \fH$ is contained in $W^Z$. Therefore, the image of $Q$ is contained in some irreducible component of $W^Z$, say $U$. \\

With this choice of $U,V,W$ we see that
\[\codim_{Y \times \fH^\ast} V = \codim_{\fH^\ast} Z\]
and
\[\codim_{Y \times \fH^\ast} W = \dim \fH^\ast,\]
whence
\[\codim_{Y \times \fH^\ast} W + \codim_{Y \times \fH^\ast} V = \dim \fH^\ast + \codim_{\fH^\ast} Z \geq \dim \fH^\ast + \dim Y.\]
Since 
\[\codim_{Y \times \fH^\ast} U \leq \dim \fH^\ast + \dim Y,\]
it follows that either $\dim U = 0$ or Bakker-Tsimerman applies, and the projection of $U$ to $Y$ is contained in a proper weak Mumford-Tate domain. The same applies to $Q$, which finishes the proof. \end{proof}

As in \cite{LV}, we need to transfer Lemma \ref{BT lem} to the $v$-adic setting. We note that the arguments in \cite{LV} only deal with the $\bQ_p$-case, but for our application the arguments are essentially the same. 

\begin{proposition} \label{period fib} Let $\Omega_v$ be the residue disk of $y_0$ in $Y(K_v)$. Then for any $K_v$-algebraic subvariety $Z \subset \fH_{K_v}^\ast$ satisfying (\ref{codim Z}), the set $\Phi_v^{-1}(Z)$ is not Zariski-dense in $Y$. Moreover, the number of irreducible components of $\Phi_v^{-1}(Z)$ depends only on $K, v$ and the $K_v$-isomorphism class of $Y$. \end{proposition}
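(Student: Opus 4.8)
The plan is to $p$-adically interpolate the statement of Lemma \ref{BT lem}, following the strategy of \cite{LV} (their Section on transferring the archimedean statement to the $v$-adic period map). First I would observe that the key geometric input — that $\Phi^{-1}(Z)$ is, over $\bC$, contained in the preimage of a proper subvariety of $Y$ when $\codim_{\fH^\ast}(Z) \geq \dim Y$ — is a statement about the ambient \emph{algebraic} variety $Y$ and the flag variety $\fH^\ast$, both of which are defined over $K$ (or a finite extension) and hence base-change to both $\bC$ and $K_v$. The period map $\Phi_v$ is $K_v$-analytic on the residue disk $\Omega_v$, and its "graph" satisfies the same algebraic constraints (the Gauss–Manin connection, Griffiths transversality) as the complex period map; crucially, the complex monodromy group $\Gamma$ controls the Zariski closure of the image in \emph{both} the archimedean and non-archimedean settings — this is exactly the content of the Lemma preceding Proposition \ref{not zar}, which gives $\dim \overline{\phi_{y_0}(\Omega_v(y_0))} \geq D_\mu = \dim \Gamma \cdot h_0$.

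The main steps, in order: (1) Let $Q_v$ be an irreducible component of $\Phi_v^{-1}(Z)$; argue that its Zariski closure $\ol{Q_v}$ in $Y_{K_v}$ is a $K_v$-subvariety on which the period map factors through $Z$. (2) Show that $\ol{Q_v}$ cannot be all of $Y_{K_v}$: if it were, then $\Phi_v$ would map a Zariski-dense subset of the residue disk into $Z$, forcing $\dim \overline{\Phi_v(\Omega_v)} \leq \dim Z \leq \dim \fH^\ast - \dim Y$; but combined with the dimension bound from the Lemma (which compares the $v$-adic image dimension to $D_\mu$) and the hypothesis on $Z$, one can arrange a contradiction by a codimension count identical to the one carried out inside the proof of Lemma \ref{BT lem} — the point being that $\Phi^{-1}(Z)$ failing to be Zariski-dense is a \emph{Zariski-closed} (algebraic) condition on $Y$, insensitive to the choice of field over which we realize it, once we know the complex statement. (3) For the count of irreducible components: the subvariety $\ol{Q_v} \subseteq Y_{K_v}$ arises as a component of the algebraic set cut out by the vanishing of finitely many coordinates of the period map restricted to the preimage of the relevant weak Mumford–Tate subvariety; the number of such components is bounded in terms of the degrees of the defining equations, which depend only on $Y$, $\fH^\ast$, $Z$ and hence only on the $K_v$-isomorphism class of $Y \to \fH^\ast$, the field $K$, and $v$ (the latter entering through the choice of residue disk and the reduction of the Gauss–Manin power series mod $v$). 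A Bézout-type / Noetherian bound makes this uniform.

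The hard part will be step (2): making rigorous the transfer of the Bakker–Tsimerman–type o-minimal/Ax–Schanuel input — which is genuinely archimedean and has no literal $p$-adic analogue — into a statement usable over $K_v$. The resolution, as in \cite{LV}, is that one does \emph{not} re-prove an Ax–Schanuel theorem $p$-adically; instead one uses the complex statement to deduce that the relevant algebraic locus $\{y \in Y : \Phi^{-1}(Z) \text{ is Zariski-dense near } y\}$ is empty as a subvariety of $Y$, and then this emptiness persists after base change to $K_v$ since $Y$ is a variety over a number field. The delicate point is ensuring that the $v$-adic component $Q_v$ really does land inside the preimage of the \emph{same} proper subvariety of $Y$ that the complex analysis produces — this requires that the weak Mumford–Tate subvarieties appearing in Lemma \ref{BT lem} are defined over $K$ (or a controlled extension), which is a standard consequence of the fact that Mumford–Tate groups and their associated subvarieties descend to the field of definition of the variation of Hodge structure. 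Once that descent is in hand, the remaining arguments are formal, and the uniformity of the component count follows from effective elimination theory applied to the algebraic data, none of which depends on the particular $S$-integral point $y_0$ beyond its residue class.
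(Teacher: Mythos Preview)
Your overall strategy --- transfer Lemma~\ref{BT lem} from $\bC$ to the $v$-adic disk --- is the paper's, but the mechanism you propose for the transfer does not work as written. In step~(2) you claim that if an irreducible component $Q_v$ of $\Phi_v^{-1}(Z)$ were Zariski-dense in $Y$, then $\dim\ol{\Phi_v(\Omega_v)}\leq\dim Z$; this is false, because the period map is transcendental and a Zariski-dense analytic subset $Q_v\subsetneq\Omega_v$ can map into $Z$ while $\Phi_v(\Omega_v)$ has much larger Zariski closure. For the same reason, the locus ``$\{y\in Y:\Phi^{-1}(Z)\text{ Zariski-dense near }y\}$'' is \emph{not} an algebraic subvariety of $Y$, so no base-change argument is available. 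And even granting your implication, the bound $D_\mu>\dim Z$ does not follow from $\codim_{\fH^\ast}Z\geq\dim Y$: the latter only gives $\dim Z\leq\dim\fH^\ast-\dim Y$, which need not be below $D_\mu$. So the codimension count cannot substitute for the Ax--Schanuel input. Finally, while weak Mumford--Tate subvarieties do descend, this alone does not place the $v$-adic locus $Q_v$ inside their $K_v$-points; that is exactly the nontrivial step, and it is not ``formal''.

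The paper's actual transfer is concrete and different in kind. Write $Z$ locally by regular functions $F_i$, set $G_i=F_i\circ\Phi_v$ in the Tate algebra $R$, and for each minimal prime $\fp\supset(G_1,\ldots,G_r)$ in $R$ vanishing on $\Omega_v$ produce a nonzero \emph{regular} function $H\in\fp$. This $H$ is found by fixing an isomorphism $\sigma\colon\ol{\bQ_p}\cong\bC$ and using that the $v$-adic and complex period maps satisfy the same Gauss--Manin equation over $K$, so $G_i^\sigma$ is the Taylor series of $F_i^\sigma\circ\Phi_\bC$ at $y_0^\sigma$; then Lemma~\ref{BT lem} over $\bC$ yields an algebraic $G$ vanishing on the common zero set, the analytic Nullstellensatz puts $G^m$ into $(G_i^\sigma)$, and after pulling back through $\sigma^{-1}$ and taking a norm one gets $H\in B_v$ with $H\in(G_i)+\fm^t$ for all $t$, whence $H\in\fp$ by Krull's intersection theorem. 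The product of the $H_j$ over the finitely many minimal primes cuts out a proper closed subset of $Y$ containing $\Phi_v^{-1}(Z)$, and the component count comes from the number $t$ of minimal primes in $R$ --- not a B\'ezout bound --- which depends only on the $G_i$ and hence on $K$, $v$, and the $K_v$-isomorphism class of the family.
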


\begin{proof}
Our strategy is to walk through the argument in \cite{LV} and keep track of what determines the bound on the number of components. The image $\mathfrak{R}=\phi(\Omega_v)$ is contained in a residue disk containing $\Phi_v(y_0)$ of the flag space $\fH_v$. In particular, there is some affine open set $\Spec A_v$ of $\fH_v$ containing $\Phi_v(y_0)$. We may suppose that $Z \subset \fH_v$, is cut out locally by equations $F_1,...,F_r$, where we suppose that $F_i \in A_v$, i.e., the $F_i$'s are regular functions on this affine open set. \\ 

Set $G_i=F_i\circ \phi$. These are formal power series, converging absolutely in $\Omega_v$. The common zero locus of these power series contains the fiber $\fF_y$ we are interested in. After choosing suitable coordinates we may work in the Tate algebra $K_v\langle x_1,...,x_N\rangle$, and consider

\[(G_1,...,G_r)\subseteq K_v\langle x_1,...,x_N\rangle=R \] Now choose $W=\Spec B$ in $Y$ containing the residue disk around $y_0$ with a natural map $B\ra R$. Our proof will then require the following claim: 

\begin{claim}
Let $\mathfrak{p}$ be a minimal prime ideals of $R$ that vanish at a point of $\Omega$, that is minimal among the primes containing $(G_1,...,G_r)$. Then there is a regular function $H$ in $\mathfrak{P}$.
\end{claim}

We will prove the claim as a separate lemma. Now let $\mathfrak{p}_1,\mathfrak{p}_2,...,\mathfrak{p}_t$ be the minimal prime ideals of $R$ that vanish at a point of $\Omega$. The number $t$ only depends on $G_{\Phi(y)}$ and the period mapping $\phi$. Thus we apply the claim $t$ times to obtain $H_1,..., H_t$ and from the construction we have that \[ \fF_y=\phi^{-1}(G_{\Phi(y)})\subseteq V(G_1,...,G_r) \subseteq V(H_1....H_t)\]
Since the number of the components of the $H_i$ are uniformly bounded, and the number of the $H_i$ depends on only on $G_{\Phi(y)}$ and $\phi$ the claim follows. \end{proof}



\begin{lemma}
Let $\mathfrak{p}$ be a minimal prime ideals of $R$ that vanish at a point of $\Omega$, that is minimal among the primes containing $(G_1,...,G_r)$. Then there is a regular function $H$ in $\mathfrak{P}$.
\end{lemma}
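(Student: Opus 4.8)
The plan is to produce the function $H$ by showing that the image of the analytic set $V(\mathfrak{p})$ in $W$ is \emph{not} Zariski dense in $Y$; once this is known, any nonzero regular function on $W$ vanishing on that image lies in $\mathfrak{p}\cap B$ and serves as $H$. First I would dispose of the two degenerate cases. If $\mathfrak{p}$ is a maximal ideal, then $V(\mathfrak{p})$ is a single $K_v$-point of $\Omega$; since $\dim Y\geq 1$, any regular function on $W$ vanishing at the corresponding point of $Y$ but not identically will do. If $(G_1,\dots,G_r)=(0)$, then $\Phi_v(\Omega_v)\subseteq Z$, which is incompatible with the lower bound $\dim\overline{\Phi_v(\Omega_v)}\geq D_\mu$ proved above together with $(\ref{codim Z})$ in the situations to which the proposition is applied; this case may therefore be excluded. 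So I may assume $\mathfrak{p}$ is a genuine minimal prime over a nonzero ideal and that $V(\mathfrak{p})$ is a positive-dimensional irreducible analytic germ.

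The essential input is that the period mapping is algebraic over $K$ in the sense of the remark following $(\ref{GM})$: choosing algebraic local coordinates $x_1,\dots,x_n$ on $Y$ near $y_0$ defined over $K$ (here $n=\dim Y$, so that $R=K_v\langle x_1,\dots,x_n\rangle$), the Hodge filtration of $X_y$ and the Gauss--Manin parallel transport back to $y_0$ vary algebraically, so each coordinate of $\Phi$ is a power series in $K[[x_1,\dots,x_n]]$ which converges $v$-adically (giving $\Phi_v$ on $\Omega_v$) and, through any archimedean embedding $\iota\colon K\hookrightarrow\bC$, converges complex-analytically near $y_0^{\iota}$ (giving the archimedean period map $\Phi_\iota$ of the earlier discussion). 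Consequently the ideal of algebraic relations satisfied by these power series, equivalently the smallest algebraic subvariety of $\fH^\ast$ through which the formal germ of $\Phi$ factors, is defined over $\bar{\bQ}$, and one obtains the same subvariety whether one computes $v$-adically or archimedeanly.

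Now I would argue by contradiction: suppose the image of $V(\mathfrak{p})$ in $W$ is Zariski dense. Enlarge $K$ to a finitely generated subfield $\mathbb{F}\subset K_v$ containing the coefficients of a finite set of equations cutting out $Z$, and fix an embedding $\mathbb{F}\hookrightarrow\bC$ extending $\iota$; then $Z_{\bC}\subseteq\fH^\ast_{\bC}$ is algebraic with $\codim Z_{\bC}\geq\dim Y$, so Lemma \ref{BT lem} applies and every irreducible component of $\Phi_\iota^{-1}(Z_{\bC})$ lies in the preimage of a proper subvariety $T\subsetneq Y$. Because $\Phi$ has $K$-rational Taylor coefficients at $y_0$ and $Z$ is defined over $\mathbb{F}$, the preimage $\Phi^{-1}(Z)$ is defined by relations over $\mathbb{F}$ and $T$ may likewise be taken over $\mathbb{F}$, hence (after clearing denominators) over an $\O_K$-integral datum; re-reading these equations $v$-adically shows that $\Phi_v^{-1}(Z)\cap\Omega_v$, and in particular $V(\mathfrak{p})\subseteq V(G_1,\dots,G_r)=\Phi_v^{-1}(Z)\cap\Omega_v$, maps into $T(K_v)$. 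This contradicts Zariski density. Hence $\mathfrak{p}\cap B\neq 0$, and I take $H$ to be any nonzero element of $I(T)\cap B\subseteq\mathfrak{p}\cap B$; moreover $T$ is a proper weak Mumford--Tate subvariety, so its degree, and hence the number of components of $V(H)$, is controlled by the degree of $Z$ and of $X\to Y$ over $K_v$, which is what Proposition \ref{period fib} requires.

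The step I expect to be the main obstacle is the field-of-definition bookkeeping in the third paragraph: making rigorous that the complex conclusion supplied by Bakker--Tsimerman (Lemma \ref{BT lem}) descends to a subvariety $T$ defined over a number-theoretic ring, so that it can be transported back to the $v$-adic residue disk — in effect, that the transcendence constraint on the period germ furnished by Bakker--Tsimerman is insensitive to whether one evaluates at the archimedean or the $v$-adic place. The remaining ingredients — the behaviour of minimal primes and heights in Tate algebras, and the identity theorem for irreducible analytic germs used to pass from one point of $V(\mathfrak{p})$ to all of it — are routine.
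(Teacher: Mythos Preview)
Your overall strategy matches the paper's: transfer to $\bC$, invoke Bakker--Tsimerman (Lemma~\ref{BT lem}), and transfer back to produce a regular $H\in\mathfrak{p}$. The difference lies entirely in how the transfer is executed, and the step you correctly flag as the main obstacle is a genuine gap that the paper resolves by a different mechanism than the one you sketch.

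The paper does not attempt to descend the subvariety $T$ to a finitely generated subfield $\mathbb{F}\subset K_v$. Instead it fixes an abstract field isomorphism $\sigma\colon\ol{\bQ_p}\to\bC$ and transports $y_0$ (chosen as a vanishing point of $\mathfrak{p}$), $Z$, and the power series $G_i$ wholesale to complex objects $y_0^\sigma$, $Z^\sigma$, $G_i^\sigma$; the identity $G_i^\sigma=F_i^\sigma\circ\Phi_\bC$ holds because the $v$-adic and complex period maps satisfy the same Gauss--Manin differential equation over $K$. After Bakker--Tsimerman supplies an algebraic $G$ vanishing on $\Phi_\bC^{-1}(Z^\sigma)$ near $y_0^\sigma$, the \emph{analytic Nullstellensatz} converts this into the ideal-theoretic statement $G^m\in(G_1^\sigma,\dots,G_r^\sigma)$ in the ring of germs of holomorphic functions, hence in the completed local ring. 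That membership is purely algebraic and transports back through $\sigma^{-1}$ to give a regular $H$ (after a norm, $H\in B_v$) lying in $(G_1,\dots,G_r)$ inside the completed local ring $\hat\O$ at $y_0$. The last step --- descending from $\hat\O$ to the Tate algebra $R$ and into $\mathfrak{p}$ --- uses that $B_v$ surjects onto each $\hat\O/\mathfrak{m}^t$, so $H\in J+\mathfrak{m}^t$ in $R$ for every $t$, and then Krull's intersection theorem in the Noetherian domain $R/\mathfrak{p}$ forces $H\in\mathfrak{p}$.

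Your route via $\mathbb{F}$ does not obviously close: Bakker--Tsimerman outputs $T$ as a weak Mumford--Tate subvariety with no a priori reason to be defined over your $\mathbb{F}$, and ``re-reading $v$-adically'' the analytic containment $\Phi_\bC^{-1}(Z_\bC)\subset T$ is itself an ideal-membership statement in a power-series ring, not a finite system of polynomial equations one can simply reinterpret over another completion; making it rigorous requires something like the Nullstellensatz-plus-Krull argument above. The degenerate-case analysis and the final uniformity remark in your proposal are extraneous to the proof of this lemma as the paper presents it.
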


\begin{proof} We may choose $B$ in such a way so that the number of prime ideals in $R$ is minimal. The ideal $\fp$ vanishes at some point of $\Omega_v$ by assumption, say at $y_0$. We then transfer the question to the complex numbers. Fix an isomorphism $\sigma: \ol{\bQ_p} \rightarrow \bC$, which induces an embedding $\sigma : K_v \rightarrow \bC$. Then $y_0$ gives rise to a point $y_0^\sigma \in Y(\bC)$, and the de Rham cohomology of $X_{y_0}^\sigma$ is obtained from that of $X_{y_0}$ via $\sigma$ by
\[H_{\dR}^\ast(X_{y_0}) \otimes_{K_v, \sigma)} \bC = H_{\dR}^\ast(X_{y_0^\sigma}/\bC).\]
We may regard the period map $\Phi_v$ as taking values in the Grassmanian $\fH_{K_v}$ for the left hand de Rham cohomology. \\

Let $U_\bC$ be a small complex neighbourhood of $y_0^\sigma$ and let $\Phi_{\bC} : U_\bC \rightarrow \fH_\bC$ be the complex period mapping, which we regard as taking values in the complex flag variety $\fH_\bC = \left(\fH_{K_v}\right)^\sigma$. Note we have the identification $\Phi_\bC(y_0^\sigma) = \Phi_v(y_0)^\sigma$.\\

$Z$ then gives rise to a complex algebraic variety $Z^\sigma \subset \fH_\bC$ and this subvariety satisfies
\begin{equation} \label{codim} \codim_{\fH} (Z) \geq \dim(Y).
\end{equation}
The functions $F_i$ are regular on $A_v$, an affine open, by hypothesis, whence $F_i^\sigma$ are defined over an affine open in $\fH_\bC$ containing $\Phi_\bC(y_0^\sigma)$, which locally cut out $Z^\sigma$. \\

Consider the completed local ring of $Y_v$ at $y_0$, which contains $G_i$. It is a formal power series ring over $K_v$, and $\sigma$ induces an injection from this completed local ring to the corresponding completed local ring of $Y_{\bC}$ at $y_0^\sigma$, which sends $G \mapsto G^\sigma$. Then we have
\begin{equation} \label{comp GI} G_i^\sigma = \text{ power series expansion of } F_i^\sigma \circ \Phi_\bC \text{ at } y_0^\sigma.
\end{equation}
This follows from the fact that the complex and $p$-adic period maps satisfy the same differential equation defined over $K$. It then follows from (\ref{comp GI}) that $G_i^\sigma$, a priori a complex formal power series, is in fact convergent in a small complex neighbourhood of $y_0^\sigma$; their vanishing locus for a sufficiently small such neighbourhood $V$ coincides with $\Phi_\bC^{-1}\left(Z^\sigma\right) \cap V$. \\

We now apply Lemma \ref{BT lem} to $Z^\sigma \subset \fH_{\bC}^\ast$, which shows that $\Phi_{\bC}^{-1}(Z^\sigma) \cap V \subset Y(\bC)$ is not Zariski dense in $Y(\bC)$. Moreover, the bounds from the argument of Bakker and Tsimerman uses functions in an $O$-minimal structure, and thus the finiteness produced is uniform. \\

We now analytically continue $\Phi_{\bC}$ from $V$ to a universal cover of $Y(\bC)$, obtaining finitely many irreducible components of $\Phi_{\bC}^{-1}(Z^\sigma)$ which intersect $V$. Applying Lemma \ref{BT lem} to each such component shows that the common zero-locus of $G_i^\sigma$ on $V$ is contained in the zero-locus of some algebraic function $G$. \\

Consider the ring $R_\bC = \bC \{x_1, \cdots, x_n\}$ of formal power series convergent in some neighbourhood of $0$. Given an ideal $I$ in this ring, we may associate a germ $V(I)$ of an analytic set at the origin. It then follows that the ideal of functions vanishing along this germ is precisely the radical $\sqrt{I}$ of $I$. Applying this to the ring of germs of holomorphic functions near $y_0^\sigma \in Y(\bC)$ and taking $I$ to be the ideal generated by the $G_i^\sigma$'s, we see that $\sqrt{I}$ is the ideal of functions vanishing on $V(I)$ and in particular contains $G$. It follows that there exists $m \geq 1$ such that $G^m \in I$.  \\

Hence the ideal spanned by $G_i^\sigma$ inside the ring of locally convergent power series contains the image of an algebraic function, that is, a regular function on some Zariski-open subset of $Y(\bC)$ containing $y_0^\sigma$. The same holds for formal power series, and pulling back this assertion via $\sigma^{-1}$ to $Y(\ol{\bQ_p})$. Thus, there exists a regular function $H$ in a neighbourhood of $y_0$ in $Y(\ol{\bQ_p})$ belonging to the ideal
\begin{equation} H \in \langle G_1, \cdots, G_n \rangle
\end{equation}
generated by the $G_i$'s in the completed local ring $\hat{\O}$ of $Y(\ol{\bQ_p})$ at $y_0$. Taking a norm, we may suppose that $H$ in fact arises from a regular function of $y_0$ in $Y(K_v)$. Multiplying by a suitable denominator if necessary, we may suppose without loss of generality that $H$ is regular on the chosen open affine around $y_0$, so that $H \in B_v$. Note that $B_v \otimes \ol{\bQ_p}$ surjects onto each quotient $\hat{\O}/\fm_{\hat{\O}}^t$, where $\fm_{\hat{\O}}$ is the maximal ideal of $\hat{\O}$. Therefore, for each $t \geq 1$, there are $Z_1, \cdots, Z_k \in B_v \otimes \ol{\bQ_p}$ such that
\begin{equation} \label{HZi} H \in \sum Z_i G_i + \fm_{\hat{\O}}^t.
\end{equation} 
By a simple linear algebra argument, we see that one can choose $Z_i \in B_v$ for all $i$. \\

The function $H$ then defines a rigid-analytic function on the residue disk $\Omega_v(y_0)$. Thus $H$ and $G_i$ both lie inside the Tate algebra $R$. Recall that we have fixed a prime ideal $\fp$ of $R$, contained in the maximal ideal $\fm$ associated to $y_0$, and containing the ideal $J = \langle G_1, \cdots, G_n \rangle \cap R$. Now  (\ref{HZi}) implies that 
\[H \in J + \fm^t\]
for each $t \geq 1$; thus the image of $H$ in $R/\fp$ lies in the intersection $\bigcap_{t \geq 1} \fm^t$. By Krull's intersection theorem applied to the Noetherian integral domain $R/\fp$, we have that the intersection of powers of $\fm$ is trivial. It therefore follows that $H \in \fp$, as desired.

\end{proof}

What remains to be done is that we must show the number of possibilities of $Z(\phi_v)$ is bounded only in terms of $v$. Presumably this is really a statement about the classification of certain algebraic groups over $K_v$, but we have not been able to find such a result in the literature. We are thus compelled to give the following argument. 

\begin{lemma} \label{fin Z} The number of possibilities for $Z(\phi_v)$ depends only on $v$ and not on the residue disk $\Omega$, or even the curve $Y$. 
\end{lemma}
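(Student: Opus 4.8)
The plan is to reduce the isomorphism class of $Z(\phi_v)$ to a conjugacy invariant of the linearized Frobenius operator on $V_v$, and then invoke Deligne's Weil bounds to see that only finitely many such invariants occur.

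First I would fix notation: let $f=f(v)$ be the residue degree of $v$ over $p$, let $K_v^{\mathrm{ur}}\subseteq K_v$ be the maximal unramified subextension, and set $N=\dim_{K_v}V_v$. Raising $\phi_v$ to the $[K_v:\bQ_p]$-th power (equivalently, linearizing over $K_v^{\mathrm{ur}}$ and then passing to $K_v$) produces a genuinely $K_v$-linear operator, the one appearing in Proposition \ref{not zar}, which I denote $\psi_v$. The crucial structural point is that $Z(\phi_v)$ --- whether read as the centralizer of $\psi_v$ in $\GL(V_v)\cong\GL_N$, or as the group of $\phi_v$-equivariant automorphisms of $V_v$ --- is determined, up to conjugacy in $\GL(V_v)$ and hence up to isomorphism as a $K_v$-algebraic group, by the $K_v[t]$-module structure of $(V_v,\psi_v)$; and that module structure is pinned down by its multiset of elementary divisors, equivalently by the characteristic polynomial $P(t)=\det(t-\psi_v\mid V_v)$ together with, for each monic irreducible factor $q$ of $P$ over $K_v$, the partition giving the multiplicities of the powers of $q$. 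Thus it suffices to bound, in terms of $v$ alone, the number of pairs (characteristic polynomial, elementary-divisor data) that can arise.

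To control $P(t)$ I would use the Weil conjectures. For $X_{y_0}$ of good reduction at $v$, the crystalline comparison identifies $P(t)$, up to the standard normalization, with the characteristic polynomial of the geometric Frobenius on the \'etale cohomology of the geometric special fibre; by Deligne this polynomial lies in $\bZ[t]$, is independent of the auxiliary prime $\ell\neq p$, and has all of its roots $\alpha$ of absolute value $q_v^{q/2}$ under every complex embedding, where $q_v=\#(\O_K/v)=p^f$. Hence its $j$-th coefficient is a rational integer of absolute value at most $\binom{N}{j}q_v^{qj/2}$, so $P$ ranges over an explicit finite set depending only on $N$, $q$, and $q_v$. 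Now $N$ is constant in any fixed smooth proper family by local constancy of Betti numbers, and for the twist families occurring in Theorem \ref{MT} it is even independent of the twist, since the relevant families $X\to Y$ are all $K$-forms of one fixed complex family; so $N$ and $q$ are part of the fixed data, while $q_v$ is determined by $v$. Finally, for each admissible $P$ the factorization into monic irreducibles over $K_v$ is determined, each associated partition is one of at most $\wp(N)$ choices (where $\wp(N)$ is the number of partitions of $N$), and there are at most $N$ irreducible factors; so the number of possible module types, a fortiori the number of possible groups $Z(\phi_v)$, is at most (number of admissible $P$)$\,\cdot\,\wp(N)^{N}$, which depends only on $v$. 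If instead one wishes to interpret $Z(\phi_v)$ via the semilinear operator $\phi_v$ itself, one replaces ``$K_v[t]$-module type'' by ``isomorphism class of $F$-isocrystal over $\O_K/v$ of rank $N$ with Frobenius characteristic polynomial $P$''; by Dieudonn\'e--Manin together with the finiteness of isocrystals over a finite field of bounded rank, only finitely many occur, again controlled by $N$ and $q_v$.

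The main obstacle is the first reduction: one must check cleanly that $Z(\phi_v)$ sees only the conjugacy class of $\psi_v$, and then only its $K_v[t]$-module type --- after that, the Weil bounds supply the finitely many conjugacy classes essentially for free. A secondary nuisance is the bookkeeping when $K_v/\bQ_p$ is ramified, where one must linearize over $K_v^{\mathrm{ur}}$ rather than directly over $K_v$; this changes nothing essential. I make no attempt to render the constant explicit or optimal, since only the qualitative dependence on $v$ is needed downstream.
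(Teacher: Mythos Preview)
Your argument is correct and takes a genuinely different route from the paper. The paper's proof is one sentence of substance: since the pair $(V_v,\phi_v)$ is the crystalline cohomology with its Frobenius of the special fibre $X_{y_0}\bmod v$, and that special fibre is an abelian variety over the finite residue field $k_v$, the finiteness of isomorphism classes of abelian varieties of fixed dimension over $k_v$ immediately bounds the number of possible $(V_v,\phi_v)$ and hence of $Z(\phi_v)$.

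Your approach instead linearizes $\phi_v$, observes that $Z(\phi_v)$ is a conjugacy invariant controlled by the $K_v[t]$-module type of $\psi_v$, and then invokes Deligne's Weil bounds to pin down finitely many possible characteristic polynomials, finishing with a partition count for the elementary divisors. This is more elaborate but buys something: it works for any smooth proper family, not only abelian schemes, and makes the bound in principle explicit (indeed, since the Frobenius on $H^1$ of an abelian variety is semisimple, your partition count is even unnecessary in the case at hand --- the characteristic polynomial alone determines the module type). The paper's argument, by contrast, is shorter and leans on the specific geometric input that the fibres are abelian varieties, trading generality for brevity. Both ultimately rest on the same underlying finiteness over $k_v$, viewed either through moduli of abelian varieties or through Weil-type bounds on Frobenius eigenvalues.
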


\begin{proof} Recall that $Z(\phi_v)$ is completely determined by $\phi_v$, which is then determined by the crystalline cohomology $H^q(X_y \times_{K} \ol{K}, \bQ_p)$. It is well-known that the crystalline cohomology and the Frobenius semi-linear operator $\phi_v$ depend only on the residue class modulo $v$; thus, $\phi_\nu$ depends only on the reduction of certain crystalline representations modulo $v$, which is then an abelian variety over a finite field $k_v$. It is well-known that the number of isomorphism classes of abelian varieties over $k_v$ is finite, and bounded only in terms of $|k_v|$; hence, there are at most a quantity, depending only on $v$, of possibilities for $\phi_v$, and thus the same number of possibilities for $Z(\phi_v)$. 
\end{proof}

Applying this to the curve case we obtain the following.

\begin{corollary}\label{MC}
Use the above notation. Suppose that $Y$ is a curve. Then $\fF_y$ has finitely many points, that depend only on $Z(\phi_v)$ and $\phi_{y_0}$.
\end{corollary}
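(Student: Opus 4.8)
The plan is to realise $\fF_y$ as the preimage under the period mapping $\phi_{y_0}$ (which is the map $\Phi_v$ of Proposition \ref{period fib}) of a suitable $K_v$-algebraic subvariety of $\fH^\ast_{K_v}$ satisfying the codimension bound (\ref{codim Z}), to invoke Proposition \ref{period fib}, and then to use the elementary fact that a non-Zariski-dense subset of an irreducible curve is a finite set of closed points. Beyond Proposition \ref{period fib} the only real work is to check the codimension hypothesis in the present situation and to trace the resulting bound down to a dependence on $Z(\phi_v)$ and $\phi_{y_0}$ alone.

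Concretely, I would take $Z = \ol{\G_{\Phi(y)}}$, the Zariski closure inside $\fH^\ast_{K_v}$ of the orbit $\G_{\Phi(y)} = Z(\phi_v)\cdot p_3(\Phi(y)) \subset \HHH(K_v)$; this is a $K_v$-algebraic subvariety, and by the description of the isomorphism class of a point of $\fF$ we have $\fF_y = \phi_{y_0}^{-1}(\G_{\Phi(y)}) \subset \phi_{y_0}^{-1}(Z)$. Since $Z$ is the closure of an orbit of $Z(\phi_v)$, one has $\dim Z \leq \dim Z(\phi_v) \leq \dim_{K_v} Z(\phi_v^{[K_v:\bQ_p]})$, the last inequality because any automorphism commuting with $\phi_v$ commutes with all of its powers. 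Under the standing hypothesis of Proposition \ref{not zar} this is strictly smaller than $\dim_\bC \Gamma\cdot h_0^\iota$, which is at most $\dim \fH = \dim \fH^\ast$; hence $\codim_{\fH^\ast}(Z) \geq 1 = \dim Y$, so $Z$ satisfies (\ref{codim Z}).

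Proposition \ref{period fib} then applies to $Z$: the set $\phi_{y_0}^{-1}(Z)$ is not Zariski-dense in $Y$, and the number of its irreducible components is finite and controlled. As $Y$ is an irreducible curve, a non-Zariski-dense closed set is a finite set of closed points, so $\fF_y \subset \phi_{y_0}^{-1}(Z)$ is finite, with $|\fF_y|$ at most the number of irreducible components of $\phi_{y_0}^{-1}(Z)$. To obtain the stated dependence I would reread the proof of Proposition \ref{period fib}: the convergent power series $G_i = F_i\circ\phi_{y_0}$ are determined by $\phi_{y_0}$ together with local equations $F_i$ cutting out $Z = \ol{\G_{\Phi(y)}}$ near $\Phi_v(y_0)$, and those equations are in turn determined by the acting group $Z(\phi_v)$ and the orbit point $p_3(\Phi(y))$; hence the number $t$ of minimal primes of $R$ meeting $\Omega_v(y_0)$, and the component counts of the auxiliary functions $H_i$, depend only on $Z(\phi_v)$ and $\phi_{y_0}$.

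The step I expect to be the main obstacle is making this last assertion genuinely uniform in $y$. Proposition \ref{period fib} yields finiteness for each fixed $Z$, but here $Z = \ol{\G_{\Phi(y)}}$ varies with $y$; one must observe that as $y$ runs over $\Omega_v(y_0)$ the point $p_3(\Phi(y))$ stays in a bounded region of the flag variety and the orbit closures $\ol{\G_{\Phi(y)}}$ form a family of subvarieties of bounded degree, so that the degrees of the $G_i$, the number $t$, and the component counts remain bounded, the Krull-intersection and Bakker--Tsimerman inputs of the preceding lemma furnishing bounds uniform across the family. A subsidiary point is verifying the codimension inequality for every relevant $y$, which is precisely where the hypothesis $\dim_{K_v} Z(\phi_v^{[K_v:\bQ_p]}) < \dim_\bC \Gamma\cdot h_0^\iota$ of Proposition \ref{not zar} is used.
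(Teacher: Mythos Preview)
Your proposal is correct and follows exactly the approach the paper intends: the paper offers no separate argument for Corollary~\ref{MC} beyond the sentence ``Applying this to the curve case we obtain the following,'' so the corollary is meant as an immediate application of Proposition~\ref{period fib} (equivalently, Proposition~\ref{not zar}) with $Z$ the orbit closure $\overline{\G_{\Phi(y)}}$, together with the observation that a non--Zariski-dense subset of an irreducible curve is finite. Your verification of the codimension bound via the inequality $\dim Z \leq \dim Z(\phi_v) \leq \dim_{K_v} Z(\phi_v^{[K_v:\bQ_p]}) < \dim_\bC \Gamma\cdot h_0^\iota \leq \dim\fH^\ast$, and your explicit tracing of the dependence on $Z(\phi_v)$ and $\phi_{y_0}$ (including the uniformity-in-$y$ issue), make precise what the paper leaves implicit; the paper's own proof of Proposition~\ref{period fib} already records that the number $t$ of minimal primes ``only depends on $G_{\Phi(y)}$ and the period mapping $\phi$,'' which is the same dependence you isolate.
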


In particular, we see that the intersections

\[p_3^{-1}(G_{\Phi(y)})\cap \fR \]

are finite, and the number of elements do not depend on $y$, only on the data used to construct $\fR$.

\section{The application to Mordell's conjecture: proof of Theorem \ref{MT}}

In \cite{LV} they construct a sequence of morphisms

\begin{equation} \label{KP fam} \xymatrix{X_q\ar[r]^\psi & H_q\ar[r]^\pi\ar[r] & Y} \end{equation}

called the \emph{Kodaira-Parshin} family, where $\pi$ is finite etale and $\psi$ is smooth of relative dimension $d_q=\dfrac{(q-1)(g-1)}{2}$ for some prime $q$. Furthermore, in the above set up we have that \[\overline{\textnormal{image}(\pi_1(Y(\bC),y_0))}\supset \prod_{y\in \pi^{-1}(y_0)}\textnormal{Sp}(H^1(X_y,\bQ),\omega)\]
We denote this condition as having full monodromy, it is intended to guarantee the dimension conditions that we will require in the end. To compare the result with what we need is as follows. \\

The construction of the Kodaira-Parshin family in \cite{LV} allows us to characterize the $K$-rational points of $Y$ as follows. If $E$ is a finite set on which $G_K = \Gal(\ol{\bQ}/K)$ acts, put $E(\Frob_v)$ for the subset of $E$ consisting of elements $e \in E$ for which the $\Frob_v$-orbit of $e$ has size at most $8$, and put 
\[\size_v(E) = \frac{|E(\Frob_v)|}{|E|}.\]
A feature of the family constructed by Lawrence and Venkatesh in \cite{LV}, which is critical for their proof of Mordell's conjecture, is that we have
\[Y(K) = Y(K)^\ast : = \left \{y \in Y(K) : \size_v(\pi^{-1}(y)) < \frac{1}{d+1} \right\}. \] 
Therefore, we can turn our attention to estimating the much more structured set $Y(K)^\ast$. Note that $Y(K)^\ast$ depends on the morphisms (\ref{KP fam}). \\

From the analysis given in Section 6 of \cite{LV}, it suffices to prove analogous statements to their Lemma 6.1 and 6.2. Lemma 6.2 of \cite{LV} is essentially the content of Proposition \ref{period fib}, which is our main contribution. Therefore, it remains to check that Lemma 6.1 gives a uniform bound. We restate their Lemma 6.1 for convenience:

\begin{lemma} \label{LV lem 6.1} There is a finite set $F \subset \Omega_v \cap Y(K)^\ast$ such that for all $y \in (\Omega_v \cap Y(K)^\ast) - F$, there exists $(y^\prime, w)$ with $y^\prime \in \pi^{-1}(y)$ and $w$ a place above $v$ such that $[K(y^\prime)_w : K_v] \geq 8$ and $\rho_{y^\prime}$ is simple as a $G_{K(y^\prime)}$-representation. 
\end{lemma}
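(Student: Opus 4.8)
The plan is to separate out the single arithmetically substantial ingredient --- the finiteness of the locus on $H_q$ where the Galois representation attached to a Kodaira--Parshin fibre fails to be irreducible --- and then to read off the lemma from an elementary counting argument powered by the defining inequality of $Y(K)^\ast$. Throughout, write $d=d_q=\tfrac{(q-1)(g-1)}{2}$ for the relative dimension of $\psi$.

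First I would study the reducibility locus. Let $B$ be the set of closed points $y^\prime$ of $H_q$ with $[K(y^\prime):K]\le\deg\pi$ at which $\rho_{y^\prime}$ is \emph{not} simple as a $G_{K(y^\prime)}$-representation. The first step is to prove that $B$ is finite and, for the uniformity asserted by Theorem \ref{MT}, that $|B|$ is bounded in terms only of the complex isomorphism type of the Kodaira--Parshin family (hence of $\fY$ and of the auxiliary prime $q$), of $[K:\bQ]$, and of $\deg\pi$. By the semisimplicity of the $\ell$-adic representations attached to abelian varieties over number fields, $\rho_{y^\prime}$ fails to be simple exactly when it has a proper invariant subspace; and the full monodromy hypothesis says that the geometric monodromy of $\psi$ along each component of $H_q$ contains the symplectic group $\Sp(H^1(X_{y^\prime}),\omega)$, whose standard representation is irreducible. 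Hence $\rho_{y^\prime}$ is simple whenever the image of $G_{K(y^\prime)}$ in $\GL(H^1(X_{y^\prime}))$ is Zariski-dense in a group containing this symplectic group, and the set of closed points of bounded degree of the curve $H_q$ at which this Zariski-density fails is finite by the theory of specialisation of $\ell$-adic monodromy (the open image theorem of Cadoret and Tamagawa), whose uniform form supplies the bound on $|B|$. Here one exploits the freedom in the construction to take $q$ large enough that $d\ge 2$, so that the possible degeneration of the monodromy is genuinely constrained; conceptually this expresses that a family of abelian varieties with maximal monodromy maps to $\A_d$ with image contained in no proper special subvariety, hence meeting only finitely many of the special subvarieties along which the monodromy can drop.

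Granting this, put $F:=\pi(B)\cap\Omega_v\cap Y(K)^\ast$, which is finite, and fix $y\in(\Omega_v\cap Y(K)^\ast)\setminus F$. Since $y\in Y(K)^\ast$ we have $\size_v(\pi^{-1}(y))<\tfrac{1}{d+1}$, so fewer than a $\tfrac{1}{d+1}$ fraction of the geometric points of $\pi^{-1}(y)$ have $\Frob_v$-orbit of size at most $8$; equivalently the geometric points with $\Frob_v$-orbit of size at least $9$ form a fraction exceeding $\tfrac{d}{d+1}$ of the fibre, so there is at least one such point $\widetilde{y^\prime}$. Let $y^\prime$ be the closed point of $H_q$ it lies over and $w$ the place of $K(y^\prime)$ above $v$ cut out by its $\Frob_v$-orbit; since $v$ is unramified in $H_q$, the local degree $[K(y^\prime)_w:K_v]$ equals the size of that orbit, so $[K(y^\prime)_w:K_v]\ge 9>8$. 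Finally, $y\notin\pi(B)$ forces every closed point of $\pi^{-1}(y)$, and in particular $y^\prime$, to lie outside $B$, so $\rho_{y^\prime}$ is simple as a $G_{K(y^\prime)}$-representation; thus $(y^\prime,w)$ has the required properties, and the lemma follows.

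I expect the main obstacle to be Step~1: the finiteness, and especially the \emph{uniform} finiteness, of $B$. The bookkeeping of the third paragraph --- translating between geometric points, closed points of $H_q$, and places of $K(y^\prime)$ above $v$, and reading local degrees off $\Frob_v$-orbit sizes --- is routine, as is the passage from $\size_v<\tfrac{1}{d+1}$ to a fibre point of large local degree. The work lies in pinning down a form of the $\ell$-adic specialisation/open-image theorem whose exceptional-set bound depends only on the complex Kodaira--Parshin family, on $K$, and on $\deg\pi$, and not on the particular $K$-twist of $Y$, since it is precisely over such twists that the uniform statement of Theorem \ref{MT} is claimed.
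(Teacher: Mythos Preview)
Your proof of the bare finiteness statement is correct, but it takes a route entirely different from the paper's (which simply defers to Lawrence--Venkatesh's own argument and checks its ingredients for uniformity). LV do not isolate a global reducibility locus on $H_q$. They work $p$-adically: if $\rho_{y'}$ fails to be simple over $K(y')$ then its restriction to the decomposition group at $w$ is reducible, so the associated filtered $\phi$-module has a nontrivial sub-object; their Lemmas~6.3 and~6.4 (pure linear algebra over $K_v$) then force the Hodge filtration to lie in an explicit proper Zariski-closed $\HH^{\mathrm{bad}}\subset\HH$, and their ``sublemma'' is a pigeonhole argument combining this with the constraint $\size_v<1/(d+1)$ to produce a pair $(y',w)$ of large local degree whose period image avoids $\HH^{\mathrm{bad}}$. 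Finiteness of $F$ then comes from the period-map transcendence (here, Proposition~\ref{not zar}). Your argument instead invokes Cadoret--Tamagawa to bound the global reducibility locus $B$ on $H_q$ directly, after which the counting step becomes trivial --- indeed you only use $\size_v<1$, not the sharper $1/(d+1)$, because you have already forced \emph{every} $y'$ over a good $y$ to carry a simple $\rho_{y'}$.

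The divergence matters for what the paper is actually doing. The paper does not reprove Lemma~\ref{LV lem 6.1}; its point is that LV's proof gives $|F|$ \emph{uniformly} over a twist family, precisely because Lemmas~6.3 and~6.4 depend only on $K_v$-linear data and the finiteness input is Proposition~\ref{not zar}, whose bound depends only on $K$, $v$, and the $K_v$-isomorphism class of the family. Your route would instead need a version of Cadoret--Tamagawa in which the exceptional-set bound depends only on the complex isomorphism type of the family and on $[K:\bQ]$ --- you flag this yourself as the main obstacle, and no such statement is available in the literature (their ``uniform'' refers to uniformity in $\ell$ for a fixed arithmetic family, not uniformity over twists). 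So while your argument establishes the lemma as stated, it does not slot into the paper's proof of Theorem~\ref{MT}; the $p$-adic approach is chosen exactly because it feeds the uniformity through the same period-map machinery that the rest of the paper develops.
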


What we need to check is that the size of $F$ is uniformly bounded in a twist family. We confirm this by noting that, following the proof of Lemma 6.1 in \cite{LV}, that the proof of the sublemma only depends on properties of the field $K$ and not on the curve $Y$, and that Lemmas 6.3 and 6.4 in \cite{LV} are purely statements about the linear algebra of $K_v$-vector spaces and hence independent of $Y$. Indeed, these three lemmas imply that the relevant set $\HH^{\mbox{bad}}$ is not Zariski dense in $\HH$, and thus we are in a position to apply Proposition \ref{not zar}. This completes the proof that our arguments, in addition to those given by Lawrence and Venkatesh in \cite{LV}, gives a uniform bound on the size of $Y(K)^\ast$ modulo the input coming from Faltings' lemma. \\

We now give a heuristic as to why such Galois representations should be uniformly bounded, provided that we insist that such representations have \emph{exact bad reduction} at $S$. In particular, we demand that such representations have bad reduction at every prime of $S$ and good reduction outside of $S$, rather than simply demanding that they have good reduction outside of $S$. Consider an elliptic curve $E/\bQ$ given by the usual Weierstrass model has square-free discriminant. It then follows that for every finite set $S$ of rational primes, there exists exactly one quadratic twist $E_d$ of $E$ such that $\Delta(E_d)$ is exactly divisible by the set of primes in $S$.

\section{$S$-unit equations}

In this section we prove a uniform bound for the cardinality of the set
\begin{equation} \label{U1l} U_{L,t_0} = \{t \in \O_S^\ast :1 - t \in \O_S^\ast, t \not\in (K^\ast)^2, K(t^{1m}) \cong L, t \equiv t_0 \pmod{v}\}.
\end{equation}

We begin with the preliminary set-ups as in \cite{LV}. Let $S$ be a finite set of primes in $\O_K$, and let $\O_S$ be the ring of $S$-integers in $K$. Let $\O_S^\ast$ denote the ring of units in $\O_S$. Put
\begin{equation} U = \{t \in \O_S^\ast : 1 - t \in \O_S^\ast\}.
\end{equation} 
We may suppose that $S$ contains all primes in $K$ above $2$, and that $K$ contains the $8$-th roots of unity. Let $m$ be the largest power of $2$ dividing the order of the subgroup of roots of unity in $K$. By our assumption, we see that $m \geq 8$. \\

Put 
\[U_1 = \{t \in U : t \not \in (K^\ast)^2\}.\]
As was argued in \cite{LV}, it suffices to show that $U_1$ is a finite set. Since we are interested in uniform bounds, we note that this will only cost us by a factor of $m$. \\

Now suppose that $t \in U_1$. Since $t$ is a non-square by assumption and $\mu_m \subset K$, we see that the order of $t$ in the group $(K^\ast)/(K^\ast)^m$ is exactly $m$, for otherwise there is some proper divisor $k > 1$ of $m$ and an element $a \in K^\ast$ such that $t^k = a^m$, whence $t \in a^{m/k} \mu_k$, which shows that $t$ is a square. \\

Fixing $t^{1/m}$ an $m$-th root of $t$ in $\ol{\bQ}$, the field $K(t^{1/m})$ is Galois over $K$ since $K$ contains the $m$-th roots of unity. Moreover, elementary Galois theory shows that $\Gal(K(t^{1/m})/K) \cong \bZ/m\bZ$. \\

Let $\Y = \bP_{\O_S}^1 - \{0,1,\infty\}$, where $0, 1, \infty$ denote the corresponding sections over $\Spec \O_S$, and let $\Y^\prime = \bP_{\O_S}^1 - \{0, \mu_m, \infty\}$. Here $\mu_m$ denotes the set of $m$-th roots of unity. Let $\pi : \Y^\prime \rightarrow \Y$ be given by the map $u \mapsto u^m$. \\

Let $\X \rightarrow \Y^\prime$ be the \emph{Legendre family}, where the fibre $E_t$ over $t$ is given by
\[E_t : y^2 = x(x-1)(x-t).\]
Consider the composite 
\[\X \rightarrow \Y^\prime \overset{\pi}{\rightarrow} \Y;\]
this defines an abelian scheme over $\Y$. We then apply our results to the family $\X \rightarrow \Y$ and $X,Y$ the fibres of $\X, \Y$ respectively over $\Spec K$. The geometric fibre $X_t$ of $X \rightarrow Y$ over $t \in Y(K)$ is then given by the disjoint union of the curves $y^2 = x(x-1)(x - t^{1/m})$ over all $m$-th roots of $t$. \\

Our argument will rely on the following \emph{big monodromy} lemma of Lawrence and Venkatesh \cite{LV}:

\begin{lemma}[Big Monodromy] Consider the family of curves over $\bC - \{0,1\}$ whose fibre over $t \in \bC$ is the union of elliptic curves $E_z : y^2 = x(x-1)(x-z)$, over all $m$-th roots $z^m = t$. Then the action of monodromy
\begin{equation} \pi_1(\bC - \{0,1\}, t_0) \rightarrow \Aut \left(\bigoplus_{z^m = t_0} H_B^1 (E_z, \bQ) \right)
\end{equation}
has Zariski closure containing $\prod_z \SL (H_B^1 (E_z, \bQ))$. 
\end{lemma}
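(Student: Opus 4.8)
The plan is to reduce the statement to a classical computation of the monodromy of the Legendre family together with a group-theoretic argument that identifies the Zariski closure of a subgroup of a product of copies of $\SL_2$. First I would set $t_0 \in \bC - \{0,1\}$ and fix the $m$ roots $z_1, \dots, z_m$ with $z_j^m = t_0$. The covering $z \mapsto z^m$ identifies a small loop around $0$ in the $t$-plane with a permutation of the $z_j$, and loops around the points $z_j$ themselves. Concretely, $\pi_1(\bC - \{0,1\}, t_0)$ is free on two generators (a loop $\gamma_0$ around $0$ and a loop $\gamma_1$ around $1$), and I would track how each acts on $\bigoplus_{z^m = t_0} H_B^1(E_z, \bQ)$. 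The loop $\gamma_0$ cyclically permutes the summands (this is the deck transformation of $\pi$, together with the monodromy of the individual Legendre fibres as $z$ circles $0$), while $\gamma_1$ and its conjugates act through the monodromy of the Legendre family on each fixed summand, which is the standard fact that the monodromy of $E_z : y^2 = x(x-1)(x-z)$ around the three singular values $0, 1, \infty$ generates a finite-index (Zariski-dense) subgroup of $\SL_2(\bZ) = \SL(H_B^1(E_z, \bQ))$.

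Next I would argue that the Zariski closure $\Gamma$ of the monodromy image is a closed subgroup of $\prod_{j=1}^m \SL_2$ that (i) surjects onto each factor and (ii) is normalized by the cyclic permutation action of $\gamma_0$ on the factors. Since $\SL_2$ has no nontrivial proper normal algebraic subgroups other than $\{\pm 1\}$, a Goursat-type lemma shows that $\Gamma^\circ$ (the identity component) is a product of "diagonal" copies of $\SL_2$ indexed by a partition of $\{1, \dots, m\}$ into blocks on which the factors are identified via some isogenies — but over $\SL_2$ the only isogenies are the identity and conjugation by $\GL_2$. The transitivity of the $\gamma_0$-permutation then forces all blocks to have equal size, and I would rule out nontrivial diagonal identifications by exhibiting a single element of the monodromy group that acts nontrivially on one summand and trivially on an adjacent one; such an element exists because $\gamma_1$ (a loop around $1$, which lifts to loops around $\mu_m$ upstairs only at the specific preimages of $1$) acts nontrivially on exactly one fibre while the product relation in $\pi_1$ lets us isolate that factor after conjugating by powers of $\gamma_0$. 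This forces each block to be a singleton, i.e. $\Gamma^\circ = \prod_j \SL_2$, which is what is claimed.

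The main obstacle I expect is the second part of the group-theoretic step: ruling out a "diagonally embedded" Zariski closure, where all $m$ copies of $\SL_2$ are identified (or glued in larger-than-singleton blocks) via conjugations. This is exactly the situation one must exclude to go from "surjects onto each factor" to "equals the whole product," and it requires producing explicit monodromy elements that break every possible such identification — equivalently, showing the commutator of suitable lifts of $\gamma_1$ and its $\gamma_0$-conjugates generates the full product Lie algebra. I would handle this by a direct computation in $\SL_2(\bZ)$: the monodromy around $1$ in the Legendre family is (conjugate to) a unipotent $\begin{pmatrix} 1 & 2 \\ 0 & 1 \end{pmatrix}$, supported on the fibres lying over a single point $z_0$ with $z_0^m = 1$ nearest to the branch, while the deck transformation $\gamma_0$ rotates which fibre is "hit"; the Lie algebra generated by a unipotent in one factor and its cyclic rotations, under the bracket with the monodromy around $0$ (which mixes each $\mathfrak{sl}_2$ internally), exhausts $\bigoplus_j \mathfrak{sl}_2$. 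Since this computation is local near $t = 1$ and purely a matter of chasing loops through the degree-$m$ cover, it is routine once set up carefully, but it is the one place where the specific geometry of $\pi : \Y' \to \Y$ (branched over $\mu_m$, not over a single point) genuinely enters.
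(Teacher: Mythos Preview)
The paper does not give its own proof of this lemma; it is quoted verbatim as a result of Lawrence--Venkatesh \cite{LV} and used as a black box, so there is no in-paper argument to compare against. Your outline is essentially the standard argument (and the one in \cite{LV}): identify the action of the two free generators of $\pi_1(\bC-\{0,1\})$ explicitly---$\gamma_0$ cyclically permutes the $m$ summands while $\gamma_1$, lifting through the unramified-over-$1$ cover $z\mapsto z^m$, acts by the Legendre unipotent on the single summand whose sheet encircles $z=1$ and trivially on the rest---then use that the projection to each factor contains two non-commuting unipotents (from $\gamma_0^{j}\gamma_1\gamma_0^{-j}$ and $\gamma_0^{m}$) hence is Zariski-dense in $\SL_2$, and finally invoke Goursat for products of almost-simple groups, killing any diagonal block by the element supported in a single factor. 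This is correct as stated; the only place to be slightly more careful than your sketch is in checking that the projection to each factor really is dense (you need both the $\gamma_0^m$ contribution around $z=0$ and the conjugated $\gamma_1$ contribution around $z=1$, not just the latter), but you indicate this.
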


\subsection{Proof of Theorem \ref{SMT}} We diverge from the narrative in \cite{LV} by avoiding the use of Faltings' lemma. Indeed, we need to verify that the number of possible pairs
\begin{equation} \left( K_v \left(t^{1/m}\right), \rho_{t} | G_{K_v(t^{1/m})} \right) 
\end{equation}
is uniformly finite. \\

First we note that the since $K(t^{1/m}) \cong L$, that there are only finitely many possibilities for $K_v(t^{1/m}) \cong L_w$ for some prime $w$ in $L$ above $v$.  \\

For each such field $L_w$, we consider the $2$-dimensional crystalline representations $\rho_t | G_{L_w}$. We then invoke the following result of Brinon and Conrad, which is Theorem 8.3.6 in \cite{BC}:

\begin{lemma}[Brinon, Conrad] \label{BC lem} Let $\K$ be a finite extension of $\bQ_p$. The set of isomorphism classes of $2$-dimensional crystalline representations $V$ of $G_{\K}$, that have distinct Hodge-Tate weights $\{0, r\}$ with $r > 0$ and are not a direct sum of two characters is naturally parametrized by the set of quadratic polynomials $f(x) = x^2 + ax + b \in \O_{\K}[x]$ with $\ord_v(b) = r$, where $f$ is the characteristic polynomial associated to the crystalline representation. 
\end{lemma}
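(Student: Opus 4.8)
The plan is to prove this via Fontaine's functor $D_{\mathrm{cris}}$ and the Colmez--Fontaine theorem that weakly admissible filtered $\varphi$-modules are admissible. Write $\K_0$ for the maximal unramified subextension of $\K/\bQ_p$. Then $V \mapsto D_{\mathrm{cris}}(V)$ is an equivalence from the category of crystalline $\bQ_p$-representations of $G_{\K}$ to the category of weakly admissible filtered $\varphi$-modules over $\K$: finite free $\K_0$-modules $D$ equipped with a bijective $\sigma$-semilinear Frobenius $\varphi$ and an exhaustive, separated, decreasing filtration on $D_{\K} := D \otimes_{\K_0} \K$, subject to the numerical conditions $t_N(D) = t_H(D)$ and $t_N(D') \ge t_H(D')$ for every $\varphi$-stable $\K_0$-submodule $D'$. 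Under this equivalence, $2$-dimensional $V$ correspond to rank-$2$ modules $D$, the Hodge--Tate weights of $V$ are the jumps of the filtration, and $V$ is a direct sum of two characters precisely when $D$ splits as a direct sum of two rank-$1$ filtered $\varphi$-modules.

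First I would extract the linear algebra. Fix such a $D$, rank $2$, with Hodge--Tate weights $\{0,r\}$, $r > 0$, not decomposable. Since there is a single nonzero jump, the filtration on $D_{\K}$ is encoded by one $\K$-line $L = \mathrm{Fil}^1 D_{\K} = \cdots = \mathrm{Fil}^r D_{\K}$, so $t_H(D) = r$. The $\varphi$-isocrystal $(D, \varphi)$ is recorded up to isomorphism by the characteristic polynomial $f(x) = x^2 + ax + b$ of its Frobenius; the numerical conditions translate, after unwinding the Newton polygon, into $a, b$ being integral and $\ord_v(b) = t_N(D)$, while $t_N(D) = t_H(D) = r$ is forced by weak admissibility of $D$ itself. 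This yields the map from isomorphism classes to polynomials $f \in \O_{\K}[x]$ with $\ord_v(b) = r$.

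For the inverse, given $f(x) = x^2 + ax + b \in \O_{\K}[x]$ with $\ord_v(b) = r$, I would build the rank-$2$ $\varphi$-isocrystal $(D, \varphi)$ whose Frobenius has characteristic polynomial $f$ by a companion-matrix recipe (well-defined up to isomorphism, using the Dieudonn\'e--Manin normal form to see all such $f$ are realized), and then impose the filtration at a line $L \subset D_{\K}$ in \emph{general position}, i.e. not equal to any $\varphi$-stable eigenline. Weak admissibility is then checked by hand: for the whole module, $t_N = \ord_v(b) = r = t_H$; for a $\varphi$-stable $\K_0$-line $D'$ one has $t_N(D') \ge 0$ because the slopes of $f$ are nonnegative ($a, b$ integral, $\ord_v(b) > 0$), while $t_H(D') = 0$ since $L \ne D'_{\K}$ by the general-position choice. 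Hence $D_{\mathrm{cris}}^{-1}(D)$ is a crystalline representation with the prescribed Hodge--Tate weights, and it is not a direct sum of two characters because $D$ has no rank-$1$ decomposition (there is no $\varphi$-stable line whose complement is also $\varphi$-stable and filtration-compatible). Finally one checks the two constructions are mutually inverse: the characteristic polynomial of Frobenius is visibly an isomorphism invariant (injectivity), and well-definedness follows once one shows that any two general-position lines in $D_{\K}$ lie in one orbit of $\Aut(D, \varphi)$, so the isomorphism class of the filtered $\varphi$-module does not depend on the auxiliary choice of $L$.

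The main obstacle is exactly this last bookkeeping: one must verify that excluding the decomposable case is equivalent to requiring $L$ in general position, and that $\Aut(D, \varphi)$ acts transitively on the admissible lines, so that each $f$ produces precisely one isomorphism class and not a positive-dimensional family — this splits into the separable case (two distinct eigenlines, $\Aut$ essentially a torus acting on $\bP^1$ minus two points) and the inseparable case (a single Jordan block, $\Aut$ unipotent acting on $\bP^1$ minus a point), each needing a short transitivity argument. A secondary technical point, trivial when $\K_0 = \bQ_p$ but genuine otherwise, is to make precise what "the characteristic polynomial of the $\sigma$-semilinear operator $\varphi$" means and to pin down the normalization of $\ord_v$ for which weak admissibility reads $\ord_v(b) = r$; this is where the Dieudonn\'e--Manin classification of isocrystals over $\K_0$ enters.
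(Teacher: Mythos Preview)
The paper does not prove this lemma; it simply quotes it as Theorem~8.3.6 of Brinon--Conrad \cite{BC} and remarks that the argument there (stated for $\K=\bQ_p$) carries over to finite extensions. Your sketch via $D_{\mathrm{cris}}$, Colmez--Fontaine, and a direct classification of rank-$2$ weakly admissible filtered $\varphi$-modules is exactly the approach of Brinon--Conrad, so there is nothing to compare.

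One small gap in your case analysis for the transitivity of $\Aut(D,\varphi)$ on admissible lines: you treat the split semisimple case (two eigenlines, torus acting on $\bP^1$ minus two points) and the non-semisimple case (one Jordan block), but omit the case where $f$ is irreducible over $\K_0$, so that $(D,\varphi)$ has \emph{no} $\varphi$-stable lines at all. There every line is automatically in general position, $\Aut(D,\varphi)$ is the unit group of the quadratic algebra $\K_0[x]/(f)$ acting on $D$, and transitivity on $\bP^1$ follows immediately. You should also note that in this irreducible case the resulting crystalline representation is itself irreducible, hence certainly not a sum of two characters, so it belongs to the set being parametrized. Finally, your own flagged ``secondary technical point'' is genuine: when $\K_0\ne\bQ_p$ the phrase ``characteristic polynomial of $\varphi$'' needs interpretation (e.g.\ via the $\K_0$-linear map $\varphi^{[\K_0:\bQ_p]}$ or via Dieudonn\'e--Manin), and the normalization of $\ord_v$ must be fixed so that $t_N(D)=\ord_v(b)$; this is precisely why the paper's remark about extending from $\bQ_p$ is not entirely content-free.
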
 

\begin{remark} Brinon and Conrad actually proved the above lemma in the case $\K = \bQ_p$, but it is clear that their proof is purely geometric, whence applies for any finite extension of $\bQ_p$. 
\end{remark}

Lemma \ref{BC lem} then implies that in the $2$-dimensional case, the relevant crystalline representations are parametrized by the characteristic polynomials. Furthermore, since we are dealing with representations that come from restrictions of global Galois representations, such polynomials only depend on the relevant restrictions over $\bF_q$. Thus, by the same argument as in the proof of Lemma \ref{fin Z}, there are only finitely many possibilities for such representations, depending only on the number of $\bF_q$-equivalence classes of elliptic curves over $\bF_q$. With this observation, we then carry on with the proof following the strategy outlined in Section 4 of \cite{LV}. \\

Let $\rho_v$ be a fixed 2-dimensional $G_{K_v(t^{1/m})}$-representation. As before, we may use $p$-adic Hodge theory to associate a triple
\begin{equation} \left(H_{\dR}^{1} \left(X_{t, K_v}/K_v\right) \text{ as } K_v(t^{1/m})\text{-module}, \text{ Frobenius}, \text{ filtration} \right)
\end{equation}
where $H_{\dR}^1\left(X_{t, K_v}/K_v\right)$ is equipped with the structure of a 2-dimensional vector space over $K_v(t^{1/m})$. \\

Further, using the Gauss-Manin connection introduced in Section \ref{gen set}, we obtain an isomorphism 
\begin{equation} \label{SGM} H_{\dR}^1(X_{t, K_v}/K_v) \cong H_{\dR}^1 (X_{t_0, K_v}/K_v)
\end{equation}
which is compatible with the module structure over $H^0$'s. This induces an isomorphism $K_v(t^{1/m}) \cong K_v(t_0^{1/m})$. \\

The identification (\ref{SGM}) shows that the $F^1$-step of the filtration on $H_{\dR}^1(X_{t, K_v}/K_v)$ is identified with a $K_v(t_0^{1/m})$-line inside $H_{\dR}^1(X_{t_0, K_v}/K_v)$. This gives rise to a period map $\Phi$, defined by calling this line $\Phi(t)$. As before, we see that $\Phi(t)$ lies in a finite collection of orbits for 
\[Z = \text{ centralizer of } \Frob_v \text{ in } K_v(t_0^{1/m}) \text{-linear automorphisms of } H_{\dR}^1(X_{t_0, K_v}/K_v).\]
By Lemma 2.1 in \cite{LV}, we see that $\dim_{K_v} Z \leq 4$. We have thus shown that $\Omega_v$ is contained in $\Phi^{-1}(\Z)$, where $\Z \subset \Gr_{K_v}(2m, m)$ is a subvariety of dimension at most $4$. We can then apply Proposition \ref{not zar}, we have a uniform bounded on $|\Phi^{-1}(\Z)|$ provided that we verify the dimension hypothesis. \\

The dimension hypothesis is proved in the same manner as in \cite{LV}. Indeed, we note that their bounds for the size of monodromy depends only on the complex structure of those schemes involved, and thus does not depend on the set $S$ of primes. \\

It remains to deal with the possibility that the representation $\rho_v$ is a sum of two characters. This is handled by Lemma 4.4 in \cite{LV}, and we note that the argument only depends on the scheme $X$ over $Y = \bP^1 \setminus \{0, 1, \infty\}$ and the prime $v$. Therefore the finite count the Lemma produces does not depend on the set $S$, which is sufficient for our purposes.

\end{document}